\newcommand{\bx}{{\bf x}}
\def\esssup{\text{ess sup}}
\newcommand{\vertiii}[1]{{\left\vert\kern-0.25ex\left\vert\kern-0.25ex\left\vert #1
    \right\vert\kern-0.25ex\right\vert\kern-0.25ex\right\vert}}
\begin{document}

\title{An ensemble algorithm for numerical solutions to deterministic and random parabolic PDEs}
\author{
Yan Luo \thanks{School of Mathematics, Sichuan University, No.24 South Section 1, Yihuan Road, Chengdu, Sichuan 610064, P. R. China and School of Mathematical Sciences, University of Electronic Science and Technology of China, No.2006, Xiyuan Ave, West Hi-Tech Zone, Chengdu, Sichuan 611731, P. R. China. Research supported by the Young Scientists Fund of the National Natural Science Foundation of China grant 11501088.}
\and Zhu Wang\thanks{Department of Mathematics,
University of South Carolina, 1523 Greene Street, Columbia, SC 29208, USA (\email{wangzhu@math.sc.edu}). Research  supported by the
U.S. National Science Foundation grant DMS-1522672 and the U.S. Department of Energy grant DE-SC0016540.}
}

\maketitle

\begin{abstract}
In this paper, we develop an ensemble-based time-stepping algorithm to efficiently find numerical solutions to a group of linear, second-order parabolic partial differential equations (PDEs). Particularly, the PDE models in the group could be subject to different diffusion coefficients, initial conditions, boundary conditions, and body forces.
The proposed algorithm {  leads to a single discrete system for the group with multiple right-hand-side vectors by introducing an ensemble average of the diffusion coefficient functions and using a new semi-implicit time integration method}. 
The system could be solved more efficiently than multiple linear systems with a single right-hand-side vector.
We first apply the algorithm to deterministic parabolic PDEs and derive a rigorous error estimate that shows the scheme is first-order accurate in time and is optimally accurate in space.
We then extend it to find stochastic solutions of parabolic PDEs with random coefficients and put forth an ensemble-based Monte Carlo method.
The effectiveness of the new approach is demonstrated through theoretical analysis.
Several numerical experiments are presented to illustrate our theoretical results.
\end{abstract}

\begin{keywords}
ensemble-based method, parabolic PDEs, random parabolic PDEs, Monte Carlo method
\end{keywords}

\begin{AMS}
65C05, 65C20, 65M60
\end{AMS}

\section{Introduction}
In many application problems involving numerical simulations of PDEs, one is not interested in a single simulation, but a number of simulations with different computational settings such as distinct initial condition, boundary conditions, body force, and physical parameters.
For instance, data assimilation methods used in meteorology, such as the ensemble Kalman filter \cite{kalnay2003atmospheric}, run a numerical weather model forward for many times by perturbing initial conditions and uncertain parameters.
The ensemble of outputs is then used to update not only {the} forecast, but also the covariance matrix.
Similarly, when a random sampling method is used in {uncertainty} quantification, a model is run forward for a large number of times at the selected parameter sample values in order to collect outputs and determine the underlying statistical information \cite{gunzburger2014stochastic}.
Such a group of simulations is computationally expensive, especially, when the forward simulation is of a large scale.

Aiming at developing a fast algorithm for such applications, an
ensemble time-stepping algorithm was proposed in
\cite{jiang2014algorithm}, where a group of size $J$ Navier-Stokes
equations (NSE) with different initial conditions and forcing
terms is simulated. All solutions are found by
solving a single linear system with one shared coefficient matrix
and $J$ right-hand-side (RHS) vectors at each time step. Thus, it reduces the
storage requirements and computational cost for the group of
simulations. The algorithm is first-order accurate in time, which
was extended to higher-order accurate schemes in
\cite{jiang2015higher,jiang2017second}. For high Reynolds number
incompressible flows, ensemble regularization methods were developed in
\cite{jiang2015higher,jiang2015numerical,takhirov2015time}, and a
turbulence model based on ensemble averaging was developed in
\cite{jiang2015analysis}. The ensemble algorithm has also been
extended to simulate MHD flows in \cite{mohebujjaman2017efficient}
and to the reduced-order modeling setting in
\cite{gunzburger2017ensemble, Gunzburger2016higher}. For
parametrized flows, the ensemble algorithms were developed in
\cite{Gunzburger2016efficient} for multiple numerical simulations subject
to not only different initial condition, boundary conditions and body force,
but also physical parameters.
It is worth mentioning that the ensemble method is only applied to problems with constant physical parameters so far.

In this paper, we consider a group of numerical solutions to second-order parabolic PDEs.
We first develop an ensemble algorithm for deterministic problems, in which the physical parameter---diffusion coefficient---is a function {of} space and time; and then extend the method to stochastic problems.
To our knowledge, this is the {\em first} time that an ensemble scheme is derived for problems with non-constant parameters and is further applied to PDEs with random coefficients.

The initial boundary value problem we consider is as follows. 
\begin{equation}
\left\{
\begin{aligned}
u_t-\nabla\cdot [a(\bx,t)\nabla u] &=f(\bx,t), & \text{  in  } D\times(0,T){  ,} \\
u(\bx,t)&=g(\bx, t), \qquad  & \text{ on } \partial D\times (0,T){  ,}\\
u(\bx,0)&=u^0(\bx), & \text{  in  } D{  ,}
\end{aligned}
\right.
 \label{Eq10}
 \end{equation}
where $D$ is a bounded Lipschitz domain in $\mathbb{R}^d$ for
$d=1, 2, 3$, the diffusion coefficient $a(\bx,t)\in
L^2(W^{1,\infty}(D); 0, T)$, body force $f(\bx, t)\in
L^2(H^{-1}(D); 0, T)$, initial condition $u^0(\bx)\in H_0^1(D)\cap
H^{l+1}(D)$ with $l\geq 1$. 

We consider the setting in which one {needs} to run a group of simulations. Each of these simulations is subject to independent initial and boundary conditions, body force and diffusion coefficient.
Suppose the ensemble of simulations includes $J$ independent members, in which the $j$-th member satisfies: 
\begin{equation}
\left\{
\begin{aligned}
u_{j, t}-\nabla\cdot [a_j(\bx, t)\nabla u_j] &=f_j(\bx, t), & \text{  in  } D\times(0,T){  ,} \\
u_j(\bx, t)&=g_j(\bx, t),  & \text{ on } \partial D\times (0,T){  ,}\\
 u_j(\bx,0)&=u_{j}^0(\bx), & \text{  in  } D{  ,}
 \end{aligned}
 \right.
 \label{Eq6}
 \end{equation} 
for $j=1, 2, \ldots, J$. 
In general, when an implicit time stepping method is applied to the above system, the related discrete system would change as $j$ varies.
As a result, one {needs} to solve $J$ linear systems at each time step.
In order to improve its efficiency, we develop an ensemble-based time-stepping scheme.
For illustrating the main idea, we first present the semi-discrete (in time) system.

{  For simplicity, we consider a uniform time partition on $[0, T]$ with the step size $\Delta t$.}  
{Denote by $u_j^n$, $a_j^n$ and $f_j^n$ the functions $u_j$, $a_j$ and $f_j$ evaluated at the time instant  $t_n=n\Delta t$.}
Define the ensemble mean of the diffusion coefficient functions at time {  $t_n$} by
\begin{equation}
\overline{a}^n:=\frac{1}{J}\sum\limits_{j=1}^Ja_j(\bx, t_n).
\label{eq:abar}
\end{equation}
The new ensemble-based time stepping scheme reads, for $j=1,\ldots,J${:}
 \begin{equation}
 \frac{u_j^{n+1}-u_j^n}{{  \Delta t}}-\nabla\cdot({\overline{a}^{n+1}}\nabla
 u_j^{n+1})-\nabla\cdot[(a_j^{n+1}-{\overline{a}}^{n+1})\nabla
 u_j^n]=f_j^{n+1}
 \label{e0}
 \end{equation}
with the same boundary and initial conditions as those in \eqref{Eq6}.
Rearranging the equation gives
\begin{equation}
 \frac{1}{\Delta t}{u_j^{n+1}}-\nabla\cdot({\overline{a}}^{n+1}\nabla u_j^{n+1})
 =f_j^{n+1}+ \frac{1}{\Delta t}{u_j^n}+\nabla\cdot[(a_j^{n+1}-{\overline{a}}^{n+1})\nabla
 u_j^n]. \label{ens1}
 \end{equation}
	It is easy to see, after a spatial discretization, the coefficient matrix of the resulting linear system will be independent {of} $j$.
This represents the key feature of the ensemble method, that is, the discrete systems associated {with} an ensemble of simulations share a unique coefficient matrix and only the RHS vectors vary among the ensemble members.
Thus, when the considered problem has a small scale, one only {needs} to do LU factorization of the coefficient matrix once and use it to find the solutions of the group; 
when the problem is of a large scale, the proposed scheme together with the block Krylov subspace iterative methods will improve the efficiency of a number of numerical simulations (see, e.g. \cite{parks2016block} and references therein).

 The rest of this paper is structured as follows.
 In Section \ref{sec:notation}, we introduce some notations and mathematical preliminaries.
 In Section \ref{sec:deterministic}, we analyze the ensemble scheme (\ref{e0}) under a full discretization in both space and time, and prove its stability and convergence.
In Section \ref{sec:random}, we discuss the random parabolic equations and provide stability analysis and error estimations.
Several numerical experiments are presented in Section \ref{sec:num}, which illustrates the effectiveness of our proposed scheme on both deterministic and random parabolic problems.
A few concluding remarks are given in Section \ref{sec:con}.

 \section{Notations and preliminaries}
 \label{sec:notation}
 Denote the $L^2(D)$ norm and inner product by $\|\cdot\|$ and $(\cdot,\cdot)$, respectively.
 {Let $L^\infty(D)$ be the set of bounded measurable functions equipped with the norm 
 $|v|_{\infty}:=\esssup_{\bx\in D} |v|$.} 
 Denoted by $W^{s, q}(D)$ the Sobolev space of functions having generalized
 derivatives up to the order $s$ in the space $L^q(D)$, where $s$ is a nonnegative integer and $1\leq q\leq +\infty$.
 The equipped Sobolev norm of $v\in W^{s,q}(D)$ is denoted by $\|v\|_{W^{s,q}(D)}$.
 When $q=2$, we use the notation $H^s(D)$ instead of $W^{s, 2}(D)$.
 As usual, the function space $H_0^1(D)$
 is the subspace of $H^1(D)$ consisting  of functions that vanish
 on the boundary of $D$ in the sense of trace, equipped with the
 norm $\|v\|_{H^1_0(D)}=\left(\int_D|\nabla v|^2\, d\bx\right)^{1/2}$.
 When $s= 0$, we shall keep the notation with $L^q(D)$ instead of $W^{0, q}(D)$. 
 The space $H^{-s}(D)$ is the dual space of bounded linear
 functions on $H_0^s(D)$. A norm for $H^{-1}(D)$ is defined by
 $
 \|f\|_{-1}=\sup_{0\neq v\in H_0^1(D)}{(f,v)}/{\|\nabla
 v\|}.
 $
 

{   Stochastic functions have different structures. 
Let $(\Omega,\mathcal{F},P)$ be a complete probability space, where $\Omega$ is the set of outcomes, $\mathcal{F}\subset 2^\Omega$ is the $\sigma-$algebra of events, and $P: \mathcal{F} \rightarrow [0, 1]$ is a probability measure. If $Y$ is a random variable in the space and belongs to $L_P^1(\Omega)$, its expected value is defined by 
$$E[Y]=\int_{\Omega}Y(\omega)dP(\omega).$$}
With the multi-index notation,
 $\alpha=(\alpha_1,\ldots,\alpha_d)$ is a $d$-tuple of nonnegative
 integers with the length $|\alpha|=\sum_{i=1}^d\alpha_i$.
The stochastic Sobolev spaces
 $\widetilde{W}^{s,q}(D)=L^q_P(\Omega,W^{s,q}(D))$ containing
 stochastic function, $v:\Omega\times D\rightarrow R$, that are
 measurable with respect to the product $\sigma$-algebra $\mathcal{F}\bigotimes
 B(D)$ and equipped with the averaged norms $\|v\|_{\widetilde{W}^{s,q}(D)}= \left( E\left[ \|v\|^q_{W^{s,q}(D)} \right] \right)^{1/q}
 = \left( E \left[ \sum_{|\alpha\leq s|}\int_D|\partial^\alpha v|^qd\bx \right] \right)^{1/q},1\leq
 q<+\infty$.
 Observe that if $v\in \widetilde{W}^{s,q}(D)$, then $v(\omega,\cdot)\in
 W^{s,q}(D)$ almost surely (a.s.) and $\partial^\alpha v(\cdot, \bx)\in
 L^q_P(\Omega)$ almost everywhere (a.e.) on the $D$ for $|\alpha|\leq s$. Whenever
 $q=2$, the above space is a Hilbert space, i.e., $\widetilde{W}^{s,2}(D)=\widetilde{H}^s(D)\simeq L^2_P(\Omega)\bigotimes
 H^s(D)$. In this paper, we consider the tensor product Hilbert space $H=\widetilde{L}^2(H^1_0(D); 0,T)\simeq L^2_P(\Omega; H_0^1(D); 0,T)$
 endowed with the inner product $(v,u)_H\equiv E\left[\int_0^T\int_D\nabla v\cdot\nabla u\,d\bx\,dt\right]$.\\

\section{The ensemble scheme of deterministic parabolic equations}
\label{sec:deterministic}
We first consider the deterministic parabolic equations and analyze the full discretization of the proposed ensemble scheme \eqref{e0}. For simplicity of presentation, we consider the problem with a homogeneous boundary condition, {   while the nonhomogeneous cases can be similarly analyzed incorporating the method of shifting (see Section 5.4 in \cite{brennermathematical}). Furthermore, we include numerical test cases with nonhomogeneous boundary conditions in Section \ref{sec:num}. 
}

{ 
Suppose the following two conditions are valid: 
\begin{itemize} 
\item[(i)] There exists a positive constant $\theta$ such that, for any $t\in [0,T]$, 
\begin{equation}
\min\limits_{\bx\in {\overline{D}}}a(\bx, t) \geq \theta;
 \label{eq:cond_coercivity}
\end{equation}
\item[(ii)] There exist positive constants $\theta_{-}$ and $\theta_{+}$ such that, for any $t\in [0,T]$, 
\begin{equation}
\theta_{-}\leq |a_j(\bx, t)-\overline{a}(\bx, t)|_\infty\leq\theta_{+}.
 \label{eq:cond_bound}
\end{equation}
\end{itemize}
Obviously, Condition (i) guarantees the uniform coercivity of the problem; Condition (ii) states that the distance from coefficient $a_j(\bx, t)$ to the ensemble average $\overline{a}(\bx, t)$ is uniformly bounded. 
}

Let ${\mathcal{T}}_h$ be a quasi-uniform triangulation of the domain $D$, made of elements $K$, such that $\overline{D}=\bigcup_{K\in{\mathcal{T}}_h}\overline{K}$.
Define the mesh size $h:=\max_{K \in \mathcal{T}_h } h_K$,
where $h_K$ is the diameter of the element $K$.
Denoted by $V_h$ the finite element space
$$
 V_h:=\{v\in H_0^1(D)\cap H^{l+1}(D); v|_K \text{ is a polynomial of degree } l, \forall
K\in {\mathcal{T}}_h \}.
$$
{  With the assumed uniform time partition on $[0, T]$ and set $N= T/\Delta t$,} 
the fully discrete approximation of \eqref{e0} is as follows:
Find $u_{j,h}^{n+1}\in V_h$ satisfying, for {$n= 0, \ldots, N-1$ and $j=1, \cdots, J$ }:
\begin{equation}
\begin{aligned}
\left(\frac{u_{j,h}^{n+1}-u_{j,h}^n}{\Delta t},v_h\right)
+\left(\overline{a}^{n+1}\nabla u_{j,h}^{n+1},\nabla v_h \right)
&+\left((a_j^{n+1}-\overline{a}^{n+1})\nabla u_{j,h}^n,\nabla v_h \right) \\
&=
\left(f_{j}^{n+1}, v_h\right),  \quad \forall\, v_h\in V_h
\end{aligned}
\label{S0}
\end{equation}
with the initial condition {$u_{j,h}^0\in V_h$ satisfying $(u_{j, h}^0, v_h) = (u_j^0, v_h)$, $\forall\, v_h\in V_h$. }

\subsection{Stability and convergence}

We first discuss the stability of the ensemble algorithm \eqref{S0}. 
\begin{theorem}\label{th:stability}
Suppose $f_j\in L^2(H^{-1}(D); 0,T)$ 
{   
and conditions (i) and (ii) are satisfied, 
the ensemble scheme \eqref{S0} is stable provided that 
\begin{equation}
 \theta>\theta_+.
 \label{eq:cond1}
\end{equation}
}
Furthermore, the numerical solution to (\ref{S0}) satisfies
 \begin{equation}
 \begin{aligned}
\|u_{j,h}^N\|^2+\theta_-\Delta t\|\nabla
 u_{j,h}^{N}\|^2+(\theta-\theta_+)\Delta t\sum_{n=1}^{N}\|\nabla
u_{j,h}^{n}\|^2 \leq C\Delta
t\sum\limits_{n=1}^{N}\|f_j^{n}\|_{-1}^2\\
+C\Delta t\|\nabla
 u_{j,h}^0\|^2+\|u_{j,h}^0\|^2,
 \end{aligned}
  \label{eq:stability}
 \end{equation}
where $C$  is a generic positive constant independent of
$J$, $h$ and $\Delta t$.
 \end{theorem}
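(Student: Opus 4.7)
The strategy is a standard discrete energy estimate. I will set $v_h=u_{j,h}^{n+1}$ in \eqref{S0}. The time-derivative pairing becomes, via the polarization identity, $\tfrac{1}{2\Delta t}(\|u_{j,h}^{n+1}\|^2-\|u_{j,h}^n\|^2+\|u_{j,h}^{n+1}-u_{j,h}^n\|^2)$: the first two pieces telescope on summation and the jump piece is nonnegative and simply dropped from the left-hand side. Coercivity \eqref{eq:cond_coercivity} bounds the ensemble-mean diffusion term below by $\theta\|\nabla u_{j,h}^{n+1}\|^2$, and the forcing pairing is bounded through the dual norm as $\|f_j^{n+1}\|_{-1}\|\nabla u_{j,h}^{n+1}\|$ followed by Young's inequality with a small weight so that the resulting $\|\nabla u_{j,h}^{n+1}\|^2$ contribution can be absorbed into the coercivity term at the end.

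The key step is the explicit fluctuation pairing $((a_j^{n+1}-\overline{a}^{n+1})\nabla u_{j,h}^n,\nabla u_{j,h}^{n+1})$. Cauchy--Schwarz together with the upper bound in \eqref{eq:cond_bound} yields $\theta_+\|\nabla u_{j,h}^n\|\,\|\nabla u_{j,h}^{n+1}\|$, which I then split symmetrically via Young's inequality into $\tfrac{\theta_+}{2}\|\nabla u_{j,h}^n\|^2+\tfrac{\theta_+}{2}\|\nabla u_{j,h}^{n+1}\|^2$. Multiplying the resulting inequality by $2\Delta t$ and summing from $n=0$ to $N-1$ telescopes the time-derivative contribution to $\|u_{j,h}^N\|^2-\|u_{j,h}^0\|^2$ and aligns the two halves of the split fluctuation bound as the shifted sums $\theta_+\Delta t\sum_{n=0}^{N-1}\|\nabla u_{j,h}^n\|^2$ and $\theta_+\Delta t\sum_{n=1}^{N}\|\nabla u_{j,h}^n\|^2$. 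Combined with the coercivity contribution $2\theta\Delta t\sum_{n=1}^{N}\|\nabla u_{j,h}^n\|^2$, the interior indices $n=1,\dots,N-1$ acquire the coefficient $2(\theta-\theta_+)$, the initial index leaves $\theta_+\Delta t\|\nabla u_{j,h}^0\|^2$ on the right, and the terminal index $n=N$ leaves a positive coefficient in front of $\Delta t\|\nabla u_{j,h}^N\|^2$ on the left which can be weakened to $\theta_-$ to match \eqref{eq:stability}. Rescaling by $1/2$ identifies the $(\theta-\theta_+)\Delta t\sum_{n=1}^{N}\|\nabla u_{j,h}^n\|^2$ factor in the statement.

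The main obstacle is exactly this reindexing: because the fluctuation is treated explicitly at $u_{j,h}^n$ while the mean diffusion and time-derivative are implicit at $u_{j,h}^{n+1}$, summation couples two sums with offset indices, and closing the estimate with a positive left-hand coefficient forces precisely the hypothesis \eqref{eq:cond1} $\theta>\theta_+$. This index asymmetry also explains the extra boundary contributions $C\Delta t\|\nabla u_{j,h}^0\|^2$ on the right and $\theta_-\Delta t\|\nabla u_{j,h}^N\|^2$ on the left appearing in \eqref{eq:stability}; the generic constant $C$ absorbs the Young weight used to dispose of the forcing term and the factor $\theta_+/2$ surviving from the initial-step boundary piece.
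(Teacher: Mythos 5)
Your proposal is correct and follows essentially the same route as the paper: test with $v_h=u_{j,h}^{n+1}$, apply polarization and coercivity, split the explicit fluctuation term by Cauchy--Schwarz and a symmetric Young inequality (the paper's $\mu=1$), absorb the forcing via a small Young weight, and close the telescoped/index-shifted sums using $\theta>\theta_+$ together with $\theta_-\le|a_j^{n+1}-\overline a^{n+1}|_\infty\le\theta_+$, which produces exactly the boundary terms $\theta_-\Delta t\|\nabla u_{j,h}^N\|^2$ and $C\Delta t\|\nabla u_{j,h}^0\|^2$. The only quibble is the final ``rescaling by $1/2$'' remark, which is unnecessary (and, taken literally, would halve the interior coefficient below $\theta-\theta_+$); simply choosing the Young weight $\alpha=(\theta-\theta_+)/2$ and leaving the inequality as summed already yields \eqref{eq:stability}.
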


\begin{proof}
Taking $v_h=u_{j,h}^{n+1}$ in (\ref{S0}), we have
\begin{equation*}
\begin{aligned}
\frac{1}{\Delta
t}(u_{j,h}^{n+1}-u_{j,h}^n,u_{j,h}^{n+1})+(\overline{a}^{n+1}\nabla
u_{j,h}^{n+1},\nabla u_{j,h}^{n+1})
&+((a_j^{n+1}-\overline{a}^{n+1})\nabla u_{j,h}^n,\nabla u_{j,h}^{n+1})\\
&=(f_j^{n+1},u_{j,h}^{n+1}).
\end{aligned}
\end{equation*}
Multiplying both sides by $\Delta t$, and using the polarization identity and coercivity of $\overline{a}^{n+1}$, we get
\begin{equation}
\begin{aligned}
\frac{1}{2}\|u_{j,h}^{n+1}\|^2 -\frac{1}{2}\|u_{j,h}^n\|^2 & + \frac{1}{2}\|u_{j,h}^{n+1}-u_{j,h}^n\|^2
+\Delta t\, \theta \|\nabla u_{j,h}^{n+1}\|^2 \\
&\leq - \Delta t((a_j^{n+1}-\overline{a}^{n+1})\nabla u_{j,h}^n,\nabla
u_{j,h}^{n+1})+\Delta t(f_j^{n+1},u_{j,h}^{n+1}). \label{S1}
\end{aligned}
\end{equation}
By the Cauchy-Schwarz and Young's inequalities, we have, for some $\mu$, $\alpha >0$, 
\begin{eqnarray}
\Delta t \left|((a_j^{n+1}-\overline{a}^{n+1})\nabla u_{j,h}^n,\nabla u_{j,h}^{n+1})\right|
&\leq& \Delta t {  \left|a_j^{n+1}-\overline{a}^{n+1}\right|_\infty} \|\nabla u_{j,h}^n\| \|\nabla u_{j,h}^{n+1}\| \label{e1} \nonumber \\
& \leq& \Delta t {  \left|a_j^{n+1}-\overline{a}^{n+1}\right|_\infty} \left(\frac{1}{2\mu}\|\nabla
u_{j,h}^n\|^2+\frac{\mu}{2}\|\nabla u_{j,h}^{n+1}\|^2\right) 
\end{eqnarray}
and 
\begin{equation}
\Delta t|(f_j^{n+1},u_{j,h}^{n+1})|\leq \Delta
t\|f_j^{n+1}\|_{-1}\|\nabla u_{j,h}^{n+1}\|\leq \frac{\Delta
t}{4\alpha}\|f_j^{n+1}\|_{-1}^2+\alpha\Delta t\|\nabla
u_{j,h}^{n+1}\|^2. \label{e2}
\end{equation}
Substituting (\ref{e1}) and (\ref{e2}) into (\ref{S1}) and
dropping  the non-negative term
$\frac{1}{2}\|u_{j,h}^{n+1}-u_{j,h}^n\|^2$, we get
\begin{eqnarray*}
\frac{1}{2}\left(\|u_{j,h}^{n+1}\|^2-\|u_{j,h}^n\|^2\right)
&+&\Delta t\left[ \theta-\alpha- \left(\frac{\mu}{2}+\frac{1}{2\mu} \right)\left|a_j^{n+1}-\overline{a}^{n+1}\right|_\infty\right] \|\nabla u_{j,h}^{n+1}\|^2\nonumber\\
&+&\frac{\Delta t}{2\mu}\left|a_j^{n+1}-\overline{a}^{n+1}\right|_\infty\left(\|\nabla
u_{j,h}^{n+1}\|^2-\|\nabla u_h^n\|^2\right)\leq \frac{\Delta
t}{4\alpha}\|f_j^{n+1}\|_{-1}^{2}.
\end{eqnarray*}
{Multiplying both sides by $2$}, summing over $n$ from 0 to $N-1$ and taking $\mu=1$ yields 
\begin{eqnarray}
\|u_{j,h}^{N}\|^2
&-&\|u_{j,h}^0\|^2 + 2\Delta t\sum\limits_{n=0}^{N-1}\left(\theta-\alpha-\left|a_j^{n+1}-\overline{a}^{n+1}\right|_\infty\right)\|\nabla u_{j,h}^{n+1}\|^2 \nonumber\\
&+&\Delta t \sum\limits_{n=0}^{N-1}
\left|a_j^{n+1}-\overline{a}^{n+1}\right|_\infty \left(\|\nabla
u_{j,h}^{n+1}\|^2-\|\nabla u_{j,h}^n\|^2\right) \leq \frac{\Delta
t}{2\alpha}\sum\limits_{n=0}^{N-1}\|f_j^{n+1}\|_{-1}^2.
\label{eq:stab_1}
\end{eqnarray}
We then choose $\alpha= (\theta-{   \left|a_j^{n+1}-\overline{a}^{n+1}\right| }_\infty)/2$ and use the conditions \eqref{eq:cond_bound} and \eqref{eq:cond1}, and obtain 
 \begin{equation*}
 \begin{aligned}
 \|u_{j,h}^N\|^2+\theta_-\Delta t\|\nabla u_{j,h}^{N}\|^2+(\theta-\theta_+)\Delta t\sum_{n=0}^{N-1}\|\nabla
u_{j,h}^{n+1}\|^2 \leq
 {  \frac{\Delta t}{\theta-\theta_{+}}}\sum\limits_{n=0}^{N-1}\|f_j^{n+1}\|_{-1}^2\\
 +{  \theta_{-}}\Delta t\|\nabla
 u_{j,h}^0\|^2+\|u_{j,h}^0\|^2.
 \end{aligned}
 \end{equation*}
 This completes the proof.
\end{proof}
\begin{remark}
\label{rem:cond1}
The stability condition \eqref{eq:cond1} requires, {for $\{a_j\}_{j=1}^J$}, the deviation of {  $a_j$} from the ensemble average $\overline{a}$ to be less than the coercivity constant $\theta$. If this is not the case, one might divide the ensemble into smaller groups so that the stability condition holds in each of them and the algorithm is stable and applicable.
\end{remark}

Next, we estimate the approximation error of the ensemble algorithm \eqref{S0}. We assume the exact solution of the PDEs is smooth enough, in particular,
\begin{equation*}
u_j\in L^2(H_0^1(D) \cap H^{l+1}(D); 0,T)
\cap H^1(H^{l+1}(D); 0,T) \cap H^2(L^2(D); 0,T).
\end{equation*}
\begin{theorem}
\label{th:error}
 Let $u_j^n$ and $u_{j, h}^n$  be the solutions of equations
\eqref{Eq6} and \eqref{S0} at time $t_n$, respectively.
Assume 
{  $f_j \in
L^2({H}^{-1}(D); 0,T)$ and conditions (i) and (ii) hold.}
{Then} there exists a generic constant $C>0$ independent
of $J$, $h$ and $\Delta t$ such that  
\begin{eqnarray}
\|u_j^N-u_{j,h}^N\|^2+\theta_-\Delta
t\|\nabla(u_j^N-u_{j,h}^{N})\|^2+{   \big(\theta-\theta_+\big)}&\Delta&
t\sum\limits_{n=1}^{N} \|\nabla(u_j^{n}-u_{j,h}^{n})\|^2
\nonumber\\
&&\leq C(\Delta t^2+ h^{2l}){  ,} 
\label{eq:th_err}
\end{eqnarray}
{  
provided that the stability condition \eqref{eq:cond1} holds, that is, $\theta>\theta_+.$
}
\end{theorem}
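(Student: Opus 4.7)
The plan is to combine a standard elliptic (Ritz) projection splitting with a variational argument that closely mirrors the stability proof of Theorem~\ref{th:stability}. Let $\pi_h:H_0^1(D)\to V_h$ denote the elliptic projection associated with a fixed, positive, smooth reference coefficient (for instance $\overline{a}^{n+1}$, or simply the $H_0^1$ projection); standard finite-element theory gives $\|u_j^n-\pi_h u_j^n\|+h\|\nabla(u_j^n-\pi_h u_j^n)\|\le C h^{l+1}\|u_j^n\|_{H^{l+1}(D)}$, and analogous bounds for the time derivative. I decompose
\begin{equation*}
u_j^n-u_{j,h}^n=(u_j^n-\pi_h u_j^n)+(\pi_h u_j^n-u_{j,h}^n)=:\eta_j^n+\phi_{j,h}^n,
\end{equation*}
where $\eta_j^n$ is controlled by the projection bounds above, and I need to estimate the discrete component $\phi_{j,h}^n\in V_h$.

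Next I would derive the error equation for $\phi_{j,h}^n$. Evaluating \eqref{Eq6} in weak form at $t_{n+1}$ and subtracting \eqref{S0} yields, for every $v_h\in V_h$,
\begin{equation*}
\left(\tfrac{\phi_{j,h}^{n+1}-\phi_{j,h}^{n}}{\Delta t},v_h\right)
+\left(\overline{a}^{n+1}\nabla\phi_{j,h}^{n+1},\nabla v_h\right)
+\left((a_j^{n+1}-\overline{a}^{n+1})\nabla\phi_{j,h}^{n},\nabla v_h\right)
=\sum_{k} (\tau_k^{n+1},v_h),
\end{equation*}
where the consistency terms $\tau_k^{n+1}$ are: (a) the backward-Euler truncation $u_{j,t}(t_{n+1})-\tfrac{u_j^{n+1}-u_j^n}{\Delta t}$, which is bounded in $L^2$ integral form by $\Delta t\int_{t_n}^{t_{n+1}}\|u_{j,tt}\|^2\,dt$; (b) the projection-in-time contributions $\tfrac{\eta_j^{n+1}-\eta_j^n}{\Delta t}$; (c) the spatial projection error $\nabla\eta_j^{n+1}$ weighted by $\overline{a}^{n+1}$; and crucially (d) the ensemble-splitting term $\nabla\cdot[(a_j^{n+1}-\overline{a}^{n+1})\nabla(u_j^{n+1}-u_j^n)]$, which in weak form is bounded using $|a_j^{n+1}-\overline{a}^{n+1}|_\infty\le\theta_+$ together with $\|\nabla(u_j^{n+1}-u_j^n)\|^2\le\Delta t\int_{t_n}^{t_{n+1}}\|\nabla u_{j,t}\|^2\,dt$, giving a $\Delta t^2$ factor after summation.

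The core estimate is then obtained by choosing $v_h=\phi_{j,h}^{n+1}$ and repeating the algebraic maneuver of Theorem~\ref{th:stability}: apply the polarization identity on the time-difference term, use the coercivity $\overline{a}^{n+1}\ge\theta$, and bound the explicit piece $((a_j^{n+1}-\overline{a}^{n+1})\nabla\phi_{j,h}^n,\nabla\phi_{j,h}^{n+1})$ by Cauchy--Schwarz and Young's inequality with $\mu=1$, producing a telescoping $|a_j^{n+1}-\overline{a}^{n+1}|_\infty(\|\nabla\phi_{j,h}^{n+1}\|^2-\|\nabla\phi_{j,h}^n\|^2)/2$ together with a term of the form $-|a_j^{n+1}-\overline{a}^{n+1}|_\infty\|\nabla\phi_{j,h}^{n+1}\|^2$ that combines with $\theta\|\nabla\phi_{j,h}^{n+1}\|^2$. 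The right-hand side consistency terms are handled term-by-term: $(\tau_k^{n+1},\phi_{j,h}^{n+1})$ is bounded by $\|\tau_k^{n+1}\|_{-1}\|\nabla\phi_{j,h}^{n+1}\|$, then split by Young with parameter $\alpha$ absorbed into the coercive term by picking $\alpha=(\theta-\theta_+)/2$, exactly as in the stability analysis and using condition \eqref{eq:cond1}.

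Summing from $n=0$ to $N-1$, telescoping, and bounding the consistency contributions by $C(\Delta t^2+h^{2l})$ via the projection estimates and the integral Taylor bounds described above yields the discrete-error bound for $\phi_{j,h}^N$ with the assumed smoothness of $u_j$. A triangle inequality with the projection error then closes the proof for the full error $u_j^N-u_{j,h}^N$. The main technical obstacle I anticipate is the ensemble-splitting consistency term (d): unlike standard backward-Euler analyses, it couples the spatial deviation $a_j^{n+1}-\overline{a}^{n+1}$ with a time increment of the exact solution, and one must be careful to distribute the gradient correctly (by integration by parts) so that it pairs with $\nabla\phi_{j,h}^{n+1}$ — not with a test function in $H^{-1}$ — and so that the resulting $\Delta t^2$ factor is genuinely obtained only after summation in $n$ by exchanging the discrete sum of $\int_{t_n}^{t_{n+1}}\|\nabla u_{j,t}\|^2\,dt$ with $\|\nabla u_{j,t}\|_{L^2(0,T;L^2)}^2$. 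No discrete Gronwall inequality is needed because the left-hand side already controls $\|\nabla\phi_{j,h}^{n+1}\|^2$ with the strictly positive constant $\theta-\theta_+$ granted by \eqref{eq:cond1}.
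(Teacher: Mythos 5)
Your proposal is correct and follows essentially the same route as the paper: split the error via a projection into $V_h$, derive the error equation containing the backward-Euler truncation and the ensemble-splitting consistency term, test with $\phi_{j,h}^{n+1}$, use polarization, coercivity, and Young's inequality to telescope the explicit term and absorb the rest into the coercive term via $\theta>\theta_+$ (no Gronwall), then sum and apply the triangle inequality. The only difference is cosmetic: the paper uses the $L^2$ projection $P_h$ (which annihilates the discrete time-difference of the projection error but leaves the gradient consistency terms, bounded by the $H^1$-norm estimate $\|\nabla(u_j^n-P_hu_j^n)\|\le Ch^{l}\|u_j^n\|_{l+1}$), whereas your Ritz projection trades that for a time-derivative projection bound; both yield the same $C(\Delta t^2+h^{2l})$ rate.
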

\begin{proof}
In order to estimate the approximation error of \eqref{S0}, we first find the error equation.
Evaluating the equation (\ref{Eq6}) at $t=t_{n+1}$, tested by $v_h\in V_h$,
yields
\begin{eqnarray}
\frac{1}{\Delta t}(u_j^{n+1}-u_j^{n},v_h)+(a_j^{n+1}\nabla
u_j^{n+1},\nabla v_h)=(f_j^{n+1},v_h)-(r^{n+1}_j,v_h),\label{Eq3}
\end{eqnarray}
where $r^{n+1}_j=u_{j,t}^{n+1}-\frac{u_j^{n+1}-u_j^n}{\Delta t}$.
Denoted by $e_j^n:=u_j^n-u_{j,h}^n$ the approximation error of the $j$-th simulation at time $t_n$.
Subtracting \eqref{S0} from \eqref{Eq3}, we have 
\begin{eqnarray}
\frac{1}{\Delta t}(e_{j}^{n+1}-e_{j}^n,v_h)&+&(\overline{a}^{n+1}\nabla
e_{j}^{n+1},\nabla v_h)+((a_j^{n+1}-\overline{a}^{n+1})\nabla e_{j}^n,\nabla
v_h)\nonumber\\
&+&((a_j^{n+1}-\overline{a}^{n+1})\nabla( u_j^{n+1}-u_j^n),\nabla v_h)
+(r^{n+1}_j,v_h)=0.
\label{eq:error}
\end{eqnarray}
Now we decompose the error as
$$
e_j^n=(u_j^n-P_h(u_j^n))-(u_{j,h}^n-P_h(u_j^n))=\rho_j^n-\phi_{j,h}^n,
$$
where $\rho_j^n= u_j^n-P_h(u_j^n)$ and $\phi_{j,h}^n= u_{j,h}^n-P_h(u_j^n)$ with $P_h(u_j^n)$ the $L^2$ projection of $u_j^n$ {onto} $V_h$, that is, $(\rho_j^n, v_h)=0$ for any $v_h\in V_h$.
We then use the decomposition in \eqref{eq:error} and obtain
\begin{eqnarray}
\frac{1}{\Delta t}(\phi_{j, h}^{n+1}
-\phi_{j, h}^n,v_h)&+&(\overline{a}^{n+1}\nabla
\phi_{j, h}^{n+1},\nabla v_h)+((a_j^{n+1}-\overline{a}^{n+1})\nabla
\phi_{j, h}^n,\nabla v_h) \nonumber\\
&=&(\overline{a}^{n+1}\nabla \rho_{j}^{n+1},\nabla v_h)
+((a_j^{n+1}-\overline{a}^{n+1})\nabla \rho_{j}^n,\nabla v_h) \nonumber\\
&+& ((a_j^{n+1}-\overline{a}^{n+1})\nabla( u_j^{n+1}-u_j^n),\nabla
v_h)+(r_j^{n+1},v_h).
\label{eq:error2}
\end{eqnarray}
Letting $v_h=\phi_{j,h}^{n+1}$ and using the polarization identity {   and coercivity \eqref{eq:cond_coercivity}}, we have
\begin{eqnarray}
\frac{1}{2\Delta t}\big(\|\phi_{j,h}^{n+1}\|^2
&-&\|\phi_{j,h}^n\|^2+\|\phi_{j,h}^{n+1}-\phi_{j,h}^n\|^2\big)+\theta\|\nabla\phi_{j,h}^{n+1}\|^2
\nonumber\\\
&\leq&
|((a_j^{n+1}-\overline{a}^{n+1})\nabla \phi_{j,h}^n,\nabla \phi_{j,h}^{n+1})| \nonumber\\
&+& |(\overline{a}^{n+1}\nabla \rho_{j}^{n+1},\nabla \phi_{j,h}^{n+1})|+|((a_j^{n+1}-\overline{a}^{n+1})\nabla \rho_{j}^n, \nabla\phi_{j,h}^{n+1})| \nonumber \\
&+& |((a_j^{n+1}-\overline{a}^{n+1})\nabla( u_j^{n+1}-u_j^n),\nabla
\phi_{j,h}^{n+1})|+|(r_j^{n+1}, \phi_{j,h}^{n+1})|. \label{error3}
\end{eqnarray}
The Cauchy-Schwarz and Young's inequalities applied to the RHS terms of (\ref{error3}) give, for any positive constants $\beta_i$ ($i= 0, \ldots, 3$), that
\begin{equation*}
|((a_j^{n+1}-\overline{a}^{n+1})\nabla \phi_{j,h}^n,\nabla
\phi_{j,h}^{n+1})|
\leq
 {   \left|a_j^{n+1}-\overline{a}^{n+1}\right| }_{\infty} \left(\frac{\|\nabla
\phi_{j,h}^n\|^2}{2}+\frac{\|\nabla\phi_{j,h}^{n+1}\|^2}{2}\right),
\end{equation*}
\begin{equation*}
|(\overline{a}^{n+1}\nabla \rho_{j}^{n+1},\nabla \phi_{j,h}^{n+1})|
\leq
{  |\overline{a}^{n+1}|_{\infty}} \left(\frac{\|\nabla\rho_j^{n+1}\|^2}{2\beta_0}+\frac{\beta_0\|\nabla\phi_{j,h}^{n+1}\|^2}{2}\right),
\end{equation*}
\begin{equation*}
|((a_j^{n+1}-\overline{a}^{n+1})\nabla \rho_{j}^n,
\nabla\phi_{j,h}^{n+1})|
\leq
{   \left|a_j^{n+1}-\overline{a}^{n+1}\right| }_{\infty} \left(\frac{\|\nabla\rho_j^{n}\|^2}{2\beta_1}+\frac{\beta_1\|\nabla\phi_{j,h}^{n+1}\|^2}{2}\right),
\end{equation*}
\begin{equation*}
\begin{aligned}
&|((a_j^{n+1}-\overline{a}^{n+1})\nabla( u_j^{n+1}-u_j^n),\nabla \phi_{j,h}^{n+1})|
\\
&\hspace{2cm}\leq
{   \left|a_j^{n+1}-\overline{a}^{n+1}\right| }_{\infty}\left(\frac{\|\nabla u_j^{n+1}-\nabla
u^n_j\|^2}{2\beta_2}+\frac{\beta_2\|\nabla \phi_{j,h}^{n+1}\|^2}{2}\right),
\end{aligned}
\end{equation*}
\begin{equation*}
|(r_j^{n+1}, \phi_{j,h}^{n+1})|
\leq
\left(\frac{\|r_j^{n+1}\|_{-1}^2}{2\beta_3}+\frac{\beta_3\|\nabla\phi_{j,h}^{n+1}\|^2}{2}\right).
\end{equation*}
Using {the} above inequalities in (\ref{error3}) and dropping the non-negative
term, $\frac{1}{2\Delta t}\|\phi_{j,h}^{n+1}-\phi_{j,h}^{n}\|^2$ on the left hand side (LHS), we get
\begin{eqnarray*}
&&\frac{1}{2\Delta t}(\|\phi_{j,h}^{n+1}\|^2-\|\phi_{j,h}^n\|^2) +\frac{{   \left|a_j^{n+1}-\overline{a}^{n+1}\right| }_\infty}{2}(\|\nabla\phi_{j,h}^{n+1}\|^2-\|\nabla\phi_{j,h}^n\|^2)
\nonumber\\
&&\qquad+\left(\theta-\frac{\beta_0}{2}{  |\overline{a}^{n+1}|_{\infty}}-\frac{\beta_1+\beta_2+2}{2}{   \left|a_j^{n+1}-\overline{a}^{n+1}\right| }_\infty
-\frac{\beta_3}{2}\right)\|\nabla\phi_{j,h}^{n+1}\|^2
\nonumber\\
&&\quad\leq
\left(\frac{{  |\overline{a}^{n+1}|_{\infty}}}{2\beta_0}\|\nabla\rho_j^{n+1}\|^2
+ \frac{{   \left|a_j^{n+1}-\overline{a}^{n+1}\right| }_\infty}{2\beta_1}\|\nabla\rho_j^n\|^2
+ \frac{{   \left|a_j^{n+1}-\overline{a}^{n+1}\right| }_\infty}{2\beta_2}\|\nabla u_j^{n+1}-\nabla u_j^n\|^2
\right.
\nonumber \\
&&\qquad \left.
+\frac{\|r_j^{n+1}\|_{-1}^2}{2\beta_3}\right).
\label{error4}
\end{eqnarray*}
Selecting {$\beta_0=\frac{\delta|a_j^{n+1}-\overline{a}^{n+1}|_\infty}{2|\overline{a}^{n+1}|_{\infty}}$,
$\beta_1=\beta_2= \frac{\delta}{2}$, and 
$\beta_3=\frac{\delta{   \left|a_j^{n+1}-\overline{a}^{n+1}\right| }_\infty}{2}$ for some positive $\delta$}, yields
\begin{eqnarray}
&\frac{1}{2\Delta t}(\|\phi_{j,h}^{n+1}\|^2-\|\phi_{j,h}^n\|^2)
+\frac{{   \left|a_j^{n+1}-\overline{a}^{n+1}\right| }_\infty}{2}(\|\nabla\phi_{j,h}^{n+1}\|^2 -\|\nabla\phi_{j,h}^n\|^2)
\nonumber\\
&+{  \left[\theta-(1+\delta){   \left|a_j^{n+1}-\overline{a}^{n+1}\right| }_\infty\right]}\|\nabla\phi_{j,h}^{n+1}\|^2
\nonumber\\
&\leq {  \frac{|\overline{a}^{n+1}|_{\infty}^2}{\delta{   \left|a_j^{n+1}-\overline{a}^{n+1}\right| }_\infty}} 
\|\nabla\rho_j^{n+1}\|^2
+{  \frac{{   \left|a_j^{n+1}-\overline{a}^{n+1}\right| }_\infty }{\delta}}\|\nabla\rho_j^n\|^2\nonumber\\
&+{  \frac{{   \left|a_j^{n+1}-\overline{a}^{n+1}\right| }_\infty}{\delta}}\|\nabla u_j^{n+1}-\nabla u_j^n\|^2+\frac{\|r_j^{n+1}\|_{-1}^2}{{  \delta{   \left|a_j^{n+1}-\overline{a}^{n+1}\right| }_\infty}}.
\label{error5}
\end{eqnarray}
{Taking $\delta= \frac{\theta-\theta_{+}}{2\theta_{+}}$, we have $\theta-(1+\delta) \left|a_j^{n+1}-\overline{a}^{n+1}\right|_\infty> \frac{\theta-\theta_{+}}{2}>0$ based on the stability condition \eqref{eq:cond1} and the upper bound in condition \eqref{eq:cond_bound}.
}
In the last two terms on the RHS of (\ref{error5}), note that
\begin{eqnarray*}
\|\nabla u_j^{n+1}-\nabla u_j^n\|^2=\int_D|\nabla u_j^{n+1}-\nabla
u_j^n|^2dx=\int_D\left|\int_{t_n}^{t_{n+1}}(\nabla u_j)_t \,dt\right|^2\,d\bx\\
\leq \Delta t \int_{t_n}^{t_{n+1}} \int_D|(\nabla u_j)_t|^2\,d\bx\,dt=\Delta
t\|\nabla u_{j,t}\|^2_{L^2(L^2(D); t_{n}, t_{n+1})}{  .}
\end{eqnarray*}
{By the integral form of Taylor's theorem}
\begin{equation*}
	u_j^n=u_j^{n+1}-\Delta t\, u_{j, t}^{n+1}{  -}\int_{t_n}^{t_{n+1}}u_{j, tt}(\cdot, s)(t_n-s)\,ds,
\end{equation*}
we have
\begin{eqnarray*}
\|r_j^{n+1}\|&=&\frac{1}{\Delta t}\left\|\int_{t_n}^{t_{n+1}}u_{j,tt}(\cdot, s)(s-t_n)\,ds\right\|
\leq \int_{t_n}^{t_{n+1}}\|u_{j,tt}(\cdot, s)\|\cdot 1\,ds \\
&\leq&{  \left[ \int_{t_n}^{t_{n+1}}\|u_{j,tt}(\cdot, s)\|^2\,ds \right]^{1/2}\left(\int_{t_n}^{t_{n+1}}1^2\,ds\right)^{1/2}}\\
&\leq& \sqrt{\Delta
t}\|u_{j,tt}\|_{L^2(L^2(D); t_n, t_{n+1})}
\end{eqnarray*}
and
\begin{equation*}
\|r_j^{n+1}\|_{-1}^2\leq C\|r_j^{n+1}\|^2\leq C\Delta t\|u_{j,tt}\|_{L^2(L^2(D); t_n, t_{n+1})}^2.
\end{equation*}
Substituting these inequalities in (\ref{error5}), considering the uniform bounds of $\left|a_j-\overline{a}\right|_{\infty}$ given in \eqref{eq:cond_bound}, multiplying both sides of (\ref{error5}) by $2\Delta t$, and summing over $n$, we get
{  
\begin{eqnarray*}
&&\|\phi_{j,h}^N\|^2
+\left(\theta-\theta_{+}\right)\Delta t\sum\limits_{n=1}^N\|\nabla\phi_{j,h}^{n}\|^2
+\theta_{-} \Delta t \|\nabla\phi_{j,h}^N\|^2  \\ \nonumber
&&\quad\leq
\frac{4\Delta t \theta_{+}}{\theta-\theta_{+}} \sum_{n=0}^{N-1}\left(
\frac{1}{\theta_{-}} |\overline{a}^{n+1}|_\infty^2 \|\nabla \rho_j^{n+1}\|^2
+ \theta_{+} \|\nabla \rho_j^n\|^2 
+ \theta_{+} \Delta t\|\nabla u_{j,t}\|_{L^2(L^2(D); t_n, t_{n+1})}^2 
\right.    \\ \nonumber
&&\quad+\left. \frac{C}{\theta_{-}} \Delta t \|u_{j,tt}\|_{L^2(L^2(D); t_n, t_{n+1})}^2  \right),
\end{eqnarray*}
}
where we used the assumption that {  $u_{j,h}^0=P_h(u_j^0)$}, thus $\|\phi_{j,h}^0\|=\|\nabla\phi_{j,h}^0\|=0$.
By the regularity assumption and  standard finite element estimates of the $L^2$ projection error in the
$H^1$ norm {(see, e.g., Section 4.4 in \cite{brennermathematical}), 
namely, for any $u_j^n\in H^{l+1}(D)\cap H_0^1(D),$
\begin{equation}
\|\nabla \rho_j^n\| = \|\nabla\big(P_h(u_j^n)-u_j^n\big)\|^2\leq Ch^{2l}\|u_j^n\|_{l+1}^2,
\end{equation}
} we have
\begin{equation}
\|\phi_{j,h}^N\|^2+{  \left( \theta-\theta_{+}\right)}\Delta
t\sum\limits_{n=0}^{N-1}\|\nabla\phi_{j,h}^{n+1}\|^2
+\theta_{-} \Delta t \|\nabla\phi_{j,h}^N\|^2 \leq
C(\Delta t^2+h^{2l}),
\end{equation}
where $C$ is a generic constant independent of the time step $\Delta t$ and mesh size $h$.
By the triangle inequality, we have the error estimate \eqref{eq:th_err}.
\end{proof}

The proposed ensemble scheme can be easily extended to more general parabolic equations such as those with random coefficients.
Next we discuss the ensemble solution to unsteady random diffusion equations.

\section{The ensemble scheme of parabolic equations with random coefficients}
\label{sec:random}
We consider numerical simulations of an unsteady heat equation for a spatially {   and temporally} varying medium, in
the absence of convection. That is, to find a random function,
$u: \Omega\times \overline{D}\times [0, T] \rightarrow \mathbb{R}$
satisfying a.s.,
\begin{equation}
\left\{
\begin{aligned}
&u_t-\nabla\cdot[{  a(\omega, \bx,t)}\nabla u]=f(\omega, \bx,  t), &\text{ in } \Omega\times D\times[0, T]{  ,} \\
 &u(\omega, \bx, t) = {  g(\omega, \bx, t)}, &\text{ on } \Omega\times \partial D\times [0, T]{  ,} \\
 &u(\omega, \bx, 0) = {  u^0(\omega,\bx)}, &\text{ in }\Omega \times D{  ,}
 \end{aligned}
 \right.
 \label{Eq11}
 \end{equation}
where $D$ is a bounded Lipschitz domain in $\mathbb{R}^d$, 
$\Omega$ is the set of outcomes in the complete probability space, 
diffusion coefficient $a$, 
source term $f$ and 
{  
boundary condition $g$: $\Omega\times D\times[0,T] \rightarrow \mathbb{R}$, 
and initial condition $u^0$: $\Omega\times D \rightarrow \mathbb{R}$ are random fields 
}
with continuous and
bounded covariance functions.

As a first step of the investigations on the ensemble method to random PDEs, we choose the Monte Carlo method for random sampling because it is nonintrusive, easy to implement, and its convergence is independent of the dimension of the uncertain model parameters.
However, other methods like Quasi Monte Carlo, Latin Hypercube Sampling,  Stochastic Collocation (see, e.g.,  \cite{niederreiter1992random,helton2003latin,babuvska2005solving,xiu2005high,mathelin2005stochastic,ganapathysubramanian2007sparse,gunzburger2014stochastic,zhu2017multi} and references therein) are applicable, too.
%
When the Monte Carlo method is applied, a large number of samples are randomly selected first, then a group of independent simulations needs to be implemented in order to quantify the underlying stochastic information of the problem.
To improve its computational efficiency, we propose an {\em ensemble-based Monte-Carlo (EMC) method} for the { purpose of} uncertainty quantification.
The method consists of the following steps:\par
\begin{enumerate}
\item Choose a set of random samples for the random medium coefficient, source term, boundary and initial conditions:
{  
$a_j\equiv a(\omega_j,\cdot,\cdot)$, $f_j \equiv f(\omega_j,\cdot, \cdot)$,  $g_j \equiv g(\omega_j, \cdot, \cdot)$, and $u_j^0\equiv u^0(\omega_j,\cdot)$
} 
for $j= 1, \ldots, J$.
Note that the corresponding solutions $u(\omega_j, \cdot, \cdot)$ are independent, identically
distributed (i.i.d.).
\item Use the uniform time partition on $[0, T]$ with the step size $\Delta t = T/N$. Define $u_{j}^n = u(\omega_j, \bx, t_n)$ and $\overline{a}^n = \frac{1}{J}\sum_{j=1}^J a(\omega_j, \bx, t_n)$. 
{  
For $j=1, \ldots, J$ and $n= 0, \ldots, N-1$, one finds $u_j^{n+1}$ satisfying the ensemble scheme \eqref{e0}. In practice, an appropriate finite element space on the mesh $\mathcal{T}_h$ could be chosen, on which one finds the finite dimensional approximation $u_h(\omega_j, \cdot, \cdot)$.
}
 \item {  Approximate $E[u]$ by the EMC sample average $\frac{1}{J}\sum_{j=1}^J u_h(\omega_j, \cdot, \cdot)$.}
If a quantity of interest $Q(u)$ is given, one analyzes the outputs from the ensemble simulations, $Q\left(u_h(\omega_1, \cdot, \cdot)\right), \ldots, Q\left(u_h(\omega_J, \cdot, \cdot)\right)$, to extract the stochastic information. 
\end{enumerate}

{  It is seen that the EMC method naturally synthesizes the ensemble-based time-stepping algorithm \eqref{e0} with the Monte-Carlo random sampling approach. It keeps the same advantage of the ensemble algorithm when applying to the deterministic PDEs: all the simulations on the selected samples would share a single coefficient matrix at each time step, thus one only needs to solve a linear system with multiple RHS vectors, which leads to the reduction of computational cost.}
Next, we derive some numerical analysis for the proposed method.

\subsection{Stability and convergence}

{  Similar to the deterministic case, we consider problems with homogeneous boundary conditions in the following analysis, which could be extended to the inhomogeneous cases by means of the method of shifting.}
 Choose the same finite element space $V_h$ as defined in Section \ref{sec:deterministic}.
Denote $u_{j, h}^n = u_h(\omega_j, \bx, t_n)$.
For the $j$-th ensemble member and for {$n=0,\cdots, N-1$},
find an approximation solution $u_{j,h}^{n+1}\in V_h$ such that
\begin{eqnarray}
\left(\frac{u_{j,h}^{n+1}-u_{j,h}^n}{\Delta t},v_h\right)+({  \overline{a}^{n+1}}\nabla
u_{j,h}^{n+1},\nabla v_h)&+&({  (a_j^{n+1}-\overline{a}^{n+1})}\nabla u_{j,h}^n,\nabla
v_h)\nonumber \\
&=&(f_{j}^{n+1},v_h), \quad \forall v_h\in V_h
\label{S}
\end{eqnarray}
{  
with the initial condition $u_{j,h}^0\in V_h$ satisfying $(u_{j, h}^0, v_h) = (u_j^0, v_h)$, $\forall\, v_h\in V_h$. 
Although \eqref{S} has the same form as \eqref{S0}, we still present it here because $u_{j, h}^n$ in \eqref{S} changes from a real-valued function to a random variable. 
}

{   
Suppose the following two conditions are valid: 
\begin{itemize}
\item[(iii)] There exists a positive constant $\theta$ such that, for any $t\in [0,T]$, 
\begin{equation}
P\{\omega\in\Omega;\min\limits_{x\in
\overline{D}}{  a(\omega, \bx,t)}>\theta\}=1.
 \label{eq:cond_coercivity_rand}
\end{equation}
\item[(iv)]There exist positive constants $\theta_{-}$ and $\theta_{+}$ such that, for any $t\in [0, T]$, 
\begin{equation}
P\{\omega_j \in \Omega; \theta_{-}\leq |a(\omega_j, \bx, t)-\overline{a}|_\infty\leq\theta_{+}\}=1.
 \label{eq:cond_bound_rand}
\end{equation}
\end{itemize}
Here, condition (iii) guarantees the uniform coercivity a.s.; condition (iv) gives the uniform bounds of the distance from coefficient $a(\omega_j, \bx, t)$ to the ensemble average $\overline{a}=\frac{1}{J}\sum_{j=1}^J a(\omega_j, \bx, t)$ a.s. 
}

Theorem \ref{th:stability} together with the property of expectation lead to the
following stability analysis for the finite element solution $u_{j,h}^n$:
\begin{theorem} \label{th:stability_R}
Suppose $f_j\in {\widetilde{L}}^2({H}^{-1}(D); 0,T)$ and 
conditions (iii) and (iv) are satisfied, the finite element solution $u_{j,h}^n$ to \eqref{S} is stable provided 
 \begin{equation}
\theta>\theta_+. 
\label{sta:con_rand1}
\end{equation}
Especially, for any $\Delta t>0$, the solution satisfies
\begin{equation}
\begin{aligned}
E\left[\|u_{j,h}^N\|^2\right]&+\theta_-\Delta tE\left[\|\nabla
u_{j,h}^{N}\|^2\right]+(\theta-\theta_+)\Delta
t\sum\limits_{n=1}^{N}E\left[\|\nabla
 u_{j,h}^{n}\|^2\right]\\
&\leq
C\Delta t\sum\limits_{n=1}^{N}E\left[\|f_j^{n}\|_{-1}^2\right]
 +C\Delta t E\left[\|\nabla u_{j,h}^0\|^2\right] 
 +E\left[\|u_{j,h}^0\|^2\right], 
 \end{aligned}
 \end{equation}
 where $C$ is a generic constant independent of $J$, $h$ and $\Delta t$.
 \end{theorem}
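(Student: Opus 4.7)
The plan is to reduce the stochastic estimate to the deterministic stability bound of Theorem \ref{th:stability} by a pathwise argument, then integrate with respect to the probability measure $P$. The ensemble scheme \eqref{S} has the same algebraic form as \eqref{S0}, with the data $a_j, f_j, u_j^0$ now being realizations of random fields at the sample point $\omega_j$. Conditions \eqref{eq:cond_coercivity_rand} and \eqref{eq:cond_bound_rand} state precisely that hypotheses \eqref{eq:cond_coercivity} and \eqref{eq:cond_bound} hold almost surely with \emph{deterministic} constants $\theta, \theta_-, \theta_+$ that do not depend on the sample, and the stability condition \eqref{sta:con_rand1} coincides with \eqref{eq:cond1}.

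First, I would apply Theorem \ref{th:stability} pathwise: for almost every $\omega_j \in \Omega$, the deterministic stability estimate \eqref{eq:stability} holds for the realization $u_{j,h}^n(\omega_j,\cdot)$, with the same generic constant $C$, since $C$ in that theorem depends only on $\theta$, $\theta_\pm$ and the Poincar\'e constant of $D$, not on the particular data. Next, I would take $E[\cdot]$ of both sides of this almost-sure inequality and invoke linearity and monotonicity of expectation to obtain exactly the claimed bound. Integrability of each term on the right-hand side follows from $f_j \in \widetilde{L}^2(H^{-1}(D); 0,T)$ together with the assumed integrability of the initial data, while integrability of the left-hand side is then a consequence of the pathwise bound itself.

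The main---and essentially only---technical point to verify is measurability in $\omega_j$ of the random variables $\|u_{j,h}^n\|^2$ and $\|\nabla u_{j,h}^n\|^2$. This follows because at each time step the finite element iterate $u_{j,h}^{n+1}$ is obtained by solving a linear system whose coefficient matrix and right-hand side depend measurably (in fact continuously) on the random data $a_j^{n+1}$, $a_j^{n+1}-\overline{a}^{n+1}$, and $f_j^{n+1}$; measurability is then propagated by induction on $n$ starting from the measurable initial projection. No new analytical difficulty arises beyond what was already handled in Theorem \ref{th:stability}, as conditions (iii) and (iv) were formulated precisely to enable this pathwise reduction.
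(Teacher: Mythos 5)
Your argument matches the paper's: the authors justify Theorem \ref{th:stability_R} precisely by applying the deterministic estimate of Theorem \ref{th:stability} realization by realization and then taking expectations, which is exactly your pathwise reduction. Your added remarks on measurability and integrability are a careful (and correct) supplement that the paper leaves implicit, but the route is the same.
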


The stability condition \eqref{sta:con_rand1} restricts the deviation of random diffusion coefficients from the  ensemble average. Similar to the deterministic case (see Remark \ref{rem:cond1}), if it does not hold, one might separate the entire ensemble into smaller groups to ensure that  \eqref{sta:con_rand1} is true for each of the small groups, then the EMC method will be applicable to all the groups.

{The full-discrete EMC approximation is defined to be $\Psi_h^n \equiv \frac{1}{J}\sum_{j=1}^J u_{j,h}^n$. Next, we will derive an estimate for $E[u^n]-\Psi_h^n$ in certain averaged norms. Note that $E[u^n]-\Psi_h^n$}
can be naturally split into two parts:
\begin{eqnarray*}
E[u^n]-\Psi_h^n
&=& \left( E[u_j^n]-E[u_{j, h}^n] \right)+\left( E[u_{j, h}^n]-\Psi_h^n \right)\\
&=&\mathcal{E}_h^n+\mathcal{E}_S^n,
\end{eqnarray*}
where we use $E[u^n] = E[u_j^n]$ in the first equality. The first part, $\mathcal{E}_h^n=E[u_j^n-u_{j, h}^n]$, is related to the finite element discretization error
controlled by the size of the spatial triangulation and time step;
while the second part, $\mathcal{E}_S^n= E[u_{j, h}^n]-\Psi_h^n$, is the statistical error controlled by the number of realizations. 
{In the following, we will analyze $\mathcal{E}_h^n$ in Theorem \ref{th:E_h}, bound $\mathcal{E}_S^n$ in Theorem \ref{th:E_S}, and obtain an error estimate of the EMC approximation in Theorem \ref{th:rand_error}.}

For $\mathcal{E}_h^n$, we have the following estimate:
\begin{theorem}\label{th:E_h}
 Let $u_j^n$ be the solution to equation (\ref{Eq11}) when $\omega= \omega_j$ and $t=t_n$, and $u_{j, h}^n$ be the solution to (\ref{S}). Suppose $u^0_j \in\widetilde{L}^2({H}_0^1(D)\cap {H}^{l+1}(D))$,
 $f_j \in \widetilde{L}^2({H}^{-1}(D); 0,T)$. 
Under conditions (iii) and (iv), there exists a generic constant $C>0$ independent of
$J$, $h$ and $\Delta t$ such that
\begin{eqnarray}
E\left[\|u_j^N-u_{j,h}^N\|^2\right]+{   \big(\theta-\theta_+\big)}\Delta
t\sum\limits_{n=1}^N E\left[\|\nabla(u_j^n-u_{j,h}^{n})\|^2\right]\nonumber\\
+\theta_{-} \Delta tE\left[\|\nabla(u_j^N-u_{j,h}^{N})\|^2\right] \leq C(\Delta
t^2+h^{2l}),
\end{eqnarray}
provided that {   the stability condition \eqref{sta:con_rand1} holds. }
\end{theorem}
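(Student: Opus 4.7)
The plan is to argue pathwise, realization by realization, essentially rerunning the proof of Theorem \ref{th:error} for each fixed $\omega=\omega_j$, and then take expectations at the very end. Conditions (iii) and (iv) hold with probability one, so the uniform coercivity lower bound $\theta$ and the uniform bounds $\theta_-,\theta_+$ on $|a(\omega_j,\bx,t)-\overline{a}|_\infty$ are in force almost surely, and every constant produced by the deterministic argument is $\omega$-independent. The conceptual heart of the proof is therefore that the stochastic finite-element error reduces to the deterministic estimate already established, now interpreted in the averaged norms of Section \ref{sec:notation}.

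Concretely, I would first derive the analog of \eqref{eq:error} by subtracting \eqref{S} from the weak form of \eqref{Eq11} evaluated at $t=t_{n+1}$ and tested against $v_h\in V_h$, working on the event of full probability where Conditions (iii)--(iv) hold. Setting $e_j^n=u_j^n-u_{j,h}^n=\rho_j^n-\phi_{j,h}^n$ with $\rho_j^n=u_j^n-P_h u_j^n$ and plugging $v_h=\phi_{j,h}^{n+1}$, the same Cauchy--Schwarz/Young manipulations with the same parameters $\beta_0,\beta_1,\beta_2,\beta_3$ and $\delta=(\theta-\theta_+)/(2\theta_+)$ used in Theorem \ref{th:error} yield, pointwise in $\omega$, the analog of \eqref{error5} with coercivity margin $(\theta-\theta_+)/2>0$ on $\|\nabla\phi_{j,h}^{n+1}\|^2$. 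I would then bound the truncation and temporal-difference terms via Taylor's theorem exactly as in the deterministic proof, sum telescopically from $n=0$ to $N-1$ using $\phi_{j,h}^0=0$, and apply the standard $L^2$-projection estimate $\|\nabla\rho_j^n\|\leq C h^l\|u_j^n\|_{l+1}$.

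Taking expectation of the resulting pathwise inequality and using linearity/monotonicity of $E[\cdot]$ produces
\begin{equation*}
E\bigl[\|\phi_{j,h}^N\|^2\bigr]+(\theta-\theta_+)\Delta t\sum_{n=1}^N E\bigl[\|\nabla\phi_{j,h}^n\|^2\bigr]+\theta_-\Delta t\,E\bigl[\|\nabla\phi_{j,h}^N\|^2\bigr]\leq C\bigl(\Delta t^2+h^{2l}\bigr),
\end{equation*}
where $C$ absorbs $E[\|u_{j,tt}\|_{L^2(L^2;0,T)}^2]$, $E[\|\nabla u_{j,t}\|_{L^2(L^2;0,T)}^2]$, and $E[\|u_j\|_{L^2(H^{l+1};0,T)}^2]$, all finite by the stochastic analog of the regularity preceding Theorem \ref{th:error}, stated in the $\widetilde{L}^2$-type Sobolev spaces from Section \ref{sec:notation}. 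A final triangle inequality $\|e_j^N\|^2\leq 2\|\rho_j^N\|^2+2\|\phi_{j,h}^N\|^2$, combined with the $L^2$- and $H^1$-projection estimates for $\rho_j$, closes the theorem's bound.

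The only genuine obstacle is bookkeeping: one must verify that every constant emerging from the deterministic proof depends solely on $\theta,\theta_\pm$ and on norms of $u_j$ whose expectations are finite, so that monotone linearity of $E[\cdot]$ cleanly transfers the pathwise estimate to the expected-value estimate. No new analytic ideas beyond those of Theorem \ref{th:error} are required; the stochastic theorem is essentially Theorem \ref{th:error} read through the expectation operator.
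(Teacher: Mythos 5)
Your proposal is correct and is essentially the paper's own argument: the paper proves this theorem in one line by applying the expectation operator to the pathwise deterministic estimate \eqref{eq:th_err} of Theorem \ref{th:error}, which is exactly what you do, with the added (and welcome) bookkeeping that the constants produced by the deterministic proof depend only on $\theta$, $\theta_\pm$ and on norms of $u_j$ whose expectations are finite under the $\widetilde{L}^2$-type regularity assumptions.
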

\begin{proof}
{  
The conclusion follows Theorem \ref{th:error} after applying the expectation on \eqref{eq:th_err}.}
\end{proof}

With the standard error estimate of the Monte Carlo
method (e.g., see \cite{liu2013discontinuous}), the statistical error $\mathcal{E}_S^n$
can be bounded as follows:

\begin{theorem}\label{th:E_S}
Suppose conditions (iii) and (iv), and the stability condition \eqref{sta:con_rand1} hold, $f_j\in {\widetilde{L}}^2({H}^{-1}(D); 0,T)$ and $u^0_j \in\widetilde{L}^2({H}_0^1(D)\cap {H}^{l+1}(D))$, 
then there is a generic constant $C>0$ independent of $J$, $h$ and $\Delta t$ such
that
\begin{equation}
\begin{aligned}
E\left[\|\mathcal{E}_S^N\|^2\right]+\theta_{-}\Delta tE\left[\|\nabla
\mathcal{E}_S^N\|^2\right]+(\theta-\theta_+){  \Delta t\sum\limits_{n=1}^{N}E\left[\|\nabla\mathcal{E}_S^n\|^2\right]}\\
\leq
\frac{C}{J}
 \Big(\Delta t\sum\limits_{n=1}^{N}E\left[\|f_j^{n}\|_{-1}^2\right]
 +\Delta t E\left[\|\nabla
 u_{j,h}^0\|^2\right]+E\left[\|u_{j,h}^0\|^2\right]\Big).
 \end{aligned}
 \label{eq:rand_serr1}
\end{equation}
\end{theorem}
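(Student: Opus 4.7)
The plan is to identify $\mathcal{E}_S^n$ as a Monte-Carlo sampling error, bound it in the appropriate averaged norms by a standard MC variance estimate, and then absorb the resulting one-sample second moments into the discrete energy bound of Theorem~\ref{th:stability_R}.

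By linearity of expectation, $\mathcal{E}_S^n = \frac{1}{J}\sum_{k=1}^J\bigl(u_{k,h}^n - E[u_{k,h}^n]\bigr)$, so $\mathcal{E}_S^n$ is the sample-mean deviation of the centered, exchangeable family $Y_k^n := u_{k,h}^n - E[u_{k,h}^n]$. Applying the standard MC variance bound cited in \cite{liu2013discontinuous} to $\mathcal{E}_S^n$ in $L^2(D)$ and to $\nabla \mathcal{E}_S^n$ in $L^2(D)$ gives
\begin{equation*}
E\bigl[\|\mathcal{E}_S^n\|^2\bigr] \leq \frac{C}{J}\,E\bigl[\|u_{j,h}^n\|^2\bigr],\qquad
E\bigl[\|\nabla\mathcal{E}_S^n\|^2\bigr] \leq \frac{C}{J}\,E\bigl[\|\nabla u_{j,h}^n\|^2\bigr].
\end{equation*}
Multiplying the gradient bound by $\Delta t$ and summing over $n=1,\ldots,N$, then adding the $N$-th step contributions, reduces the target to controlling the composite quantity $E[\|u_{j,h}^N\|^2] + \theta_-\Delta t\, E[\|\nabla u_{j,h}^N\|^2] + (\theta-\theta_+)\Delta t\sum_{n=1}^N E[\|\nabla u_{j,h}^n\|^2]$. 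But this is precisely what one obtains by taking expectations in the discrete energy inequality of Theorem~\ref{th:stability_R}, whose right-hand side matches (up to the absent $1/J$) the bracketed factor in \eqref{eq:rand_serr1}. Combining the two ingredients then delivers the claim, with the constant $C$ independent of $J$, $h$, and $\Delta t$ since both the MC constant and the stability constant are.

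The main obstacle lies in step two: because the ensemble average $\overline{a}^n$ couples every $u_{k,h}^n$ to all samples $\{\omega_j\}_{j=1}^J$, the family $\{Y_k^n\}$ is exchangeable but not strictly i.i.d., so the classical $1/J$ variance decay does not follow from independence of cross terms alone. To make this rigorous one either invokes the form of the MC bound in \cite{liu2013discontinuous} that tolerates this mild coupling, or introduces a decoupled surrogate driven by $E[a]$ in place of $\overline{a}^n$, bounding the discrepancy by a consistency argument rooted in the stability of the scheme—in effect, showing that the off-diagonal covariances of $\{Y_k^n\}$ are themselves $O(1/J)$ small so that the leading-order behavior matches the i.i.d.~case.
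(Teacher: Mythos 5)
Your main line is exactly the paper's: write $\mathcal{E}_S^n$ as the sample-mean deviation of the centered family $u_{j,h}^n-E[u_{j,h}^n]$, expand the double sum so the diagonal terms give $E[\|\nabla\mathcal{E}_S^n\|^2]\le \frac{1}{J}E[\|\nabla u_{j,h}^n\|^2]$ (and likewise for $\|\mathcal{E}_S^n\|$ and $\|\nabla\mathcal{E}_S^N\|$), and then absorb the resulting one-sample second moments using the energy bound of Theorem~\ref{th:stability_R}, whose right-hand side is precisely the bracketed factor in \eqref{eq:rand_serr1}. So the reduction you describe is the proof the paper gives, carried out by hand rather than by citation of a packaged MC variance estimate.

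The ``obstacle'' you raise in your last paragraph is genuine, but you should be aware that the paper does not resolve it either: its proof drops the off-diagonal terms $E\left[\langle E[u_h^n]-u_{i,h}^n,\,E[u_h^n]-u_{j,h}^n\rangle\right]$, $i\neq j$, on the stated grounds that $u_h^n(\omega_1,\cdot),\ldots,u_h^n(\omega_J,\cdot)$ are i.i.d.\ --- a property asserted in Step~1 of the EMC method for the \emph{exact} solutions $u(\omega_j,\cdot,\cdot)$, which depend only on their own sample. The discrete ensemble solutions $u_{j,h}^n$ produced by \eqref{S} are coupled through $\overline{a}^{n+1}=\frac{1}{J}\sum_k a(\omega_k,\cdot,t_{n+1})$ and are therefore exchangeable but not independent, exactly as you say; strictly speaking the cross terms need not vanish. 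Your sketched remedies (a decoupled surrogate driven by a deterministic coefficient, or showing the off-diagonal covariances are $O(1/J)$ so that the $J^2$ cross terms contribute at the same $1/J$ order) are plausible routes to a rigorous fix, but neither is carried out in your proposal, so as written you have not closed the gap --- you have only identified it. In short: your argument reproduces the paper's proof, and your additional observation is a legitimate criticism of that proof rather than a defect specific to your own.
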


\begin{proof}
{We first estimate $E[\|\nabla\mathcal{E}_S^n\|]$, define $\langle u_h^n, u_h^n\rangle:=(\nabla u_h^n,\nabla u_h^n)$, then we have}
\begin{eqnarray*}
E\left[{  \|\nabla{\mathcal{E}_S^n}\|^2}\right]&=&E\Big[\Big\langle\frac{1}{J}\sum\limits_{i=1}^J(E[u_h^n]-u_{i, h}^n),\frac{1}{J}\sum\limits_{j=1}^J(E[u_h^n]-u_{j, h}^n) \Big\rangle\Big]\\
&=&\frac{1}{J^2}\sum\limits_{i=1}^J\sum\limits_{j=1}^J E \left[ \langle E[u_h^n]-u_{i, h}^n,E[u_h^n]-u_{j, h}^n\rangle \right]\\
&=&\frac{1}{J^2}\sum\limits_{j=1}^JE \left[\langle
E[u_h^n]-u_{j, h}^n,E[u_h^n]-u_{j, h}^n \rangle \right].
\end{eqnarray*}
The last equality is due to the fact that $u_h^n(\omega_1,{  \cdot}), \ldots, u_h^n(\omega_J,{  \cdot})$
are i.i.d., and thus the expected value of $\langle
E[u_h^n]-u_{i, h}^n,E[u_h^n]-u_{j, h}^n \rangle$
is a zero for $i\neq j$. We now expand the quantity $\langle
E[u_h^n]-u_{j, h}^n, E[u_h^n]-u_{j, h}^n\rangle$
and use the fact that $E[u_h^n]=E[u_{j, h}^n]$
and $E[(u_h^n)^2]=E[(u_{j, h}^n)^2]$ to obtain
$$
E\left[{  \|\nabla\mathcal{E}_S^n\|^2}\right]=-\frac{1}{J}{  \|\nabla E[u_{j,h}^n]\|^2}+\frac{1}{J}E\left[{  \|\nabla u_{j,h}^n\|^2}\right].
$$
Therefore, we have
$$
E\left[{  \|\nabla\mathcal{E}_S^n\|^2}\right]\leq \frac{1}{J}E\left[{  \|\nabla u_{j,h}^n\|^2}\right].
$$
By Theorem \ref{th:stability_R}, we get
\begin{eqnarray*}
{   (\theta-\theta_+) \Delta t\sum\limits_{n=1}^{N}E \left[\|\nabla u_{j,h}^n\|^2 \right]}
&\leq&  C\Delta t\sum\limits_{n=1}^{N}E\left[\|f_j^{n}\|_{-1}^2\right]
\\ \nonumber
&+&C\Delta t E\left[\|\nabla
 u_{j,h}^0\|^2\right]+E\left[\|u_{j,h}^0\|^2\right].
\end{eqnarray*}
The other terms on the LHS of \eqref{eq:rand_serr1} involving $E[\|\mathcal{E}_S^N\|^2]$ and $E[\|\nabla \mathcal{E}_S^N\|^2]$ can be treated in the same manner. This completes the proof.
\end{proof}

The combination of error contributions from the finite element approximation and Monte Carlo sampling yields a bound for {the EMC approximation error in the following sense}:
\begin{theorem}
\label{th:rand_error} 
 For the given source function  $f_{  j}\in
\widetilde{L}^2({H}^{-1}(D); 0,T)$ and
$u^0_{  j}\in\widetilde{L}^2({H}_0^1(D)\cap {H}^{l+1}(D))$. 
Under conditions (iii) and (iv), and suppose the stability condition \eqref{sta:con_rand1} is satisfied, that is, $\theta>\theta_+$, then there holds
\begin{eqnarray}
E\left[\|E[u^N]-\Psi_h^N\|^2\right]
&+&\theta_- \Delta t E\left[\|\nabla(E[u^N]-\Psi_h^N)\|^2\right]\nonumber\\
&+&{  \big(\theta-\theta_+\big)\Delta t\sum\limits_{n=1}^{N}E\left[\|\nabla(E[u^n]-\Psi_h^n)\|^2\right]}\nonumber\\
&\leq& \frac{1}{J}\left(C\Delta t\sum\limits_{n=1}^NE\left[\|f_j^n\|_{-1}^2\right]+C\Delta
t E\left[\|\nabla u_{j,h}^0\|^2\right]+E\left[\|u_{j,h}^0\|^2\right]\right)\nonumber\\
&&+C(\Delta
t^2+h^{2l}),
\label{eq:rand1}
\end{eqnarray}
 where $C>0$ is a constant independent of $J$, $h$ and $\Delta t$.
\end{theorem}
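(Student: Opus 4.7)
The plan is to combine Theorems \ref{th:E_h} and \ref{th:E_S} via the decomposition
\[
E[u^n] - \Psi_h^n \;=\; \mathcal{E}_h^n + \mathcal{E}_S^n
\]
already introduced just above the statement. First I would apply the elementary inequality $\|a+b\|^2 \leq 2\|a\|^2 + 2\|b\|^2$ (and the same for $\|\nabla(\cdot)\|^2$) to each of the three quantities on the LHS of \eqref{eq:rand1}. This separates the target estimate into a ``finite element'' part written entirely in terms of $\mathcal{E}_h$ and a ``statistical'' part written entirely in terms of $\mathcal{E}_S$, each term carrying the matching weight $1$, $\theta_-\Delta t$, or $(\theta-\theta_+)\Delta t$ summed over $n$.

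For the statistical piece, Theorem \ref{th:E_S} already supplies exactly the $1/J$-scaled bound in precisely the required form, so that contribution is immediate. For the finite-element piece, the key observation is that $\mathcal{E}_h^n = E[u_j^n - u_{j,h}^n]$ is a deterministic function of $\bx$, so $E[\|\mathcal{E}_h^n\|^2] = \|E[u_j^n-u_{j,h}^n]\|^2$, and Jensen's inequality gives
\[
\|E[u_j^n-u_{j,h}^n]\|^2 \leq E\bigl[\|u_j^n-u_{j,h}^n\|^2\bigr], \qquad \|\nabla E[u_j^n-u_{j,h}^n]\|^2 \leq E\bigl[\|\nabla(u_j^n-u_{j,h}^n)\|^2\bigr].
\]
Applying Theorem \ref{th:E_h} to the right-hand sides (and summing the $H^1$ terms over $n$) then yields the $C(\Delta t^2 + h^{2l})$ contribution with constants that already incorporate $\theta_-$ and $\theta-\theta_+$. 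Adding this to the statistical bound gives \eqref{eq:rand1}.

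The argument is essentially bookkeeping rather than a substantial new estimate; the only subtlety is tracking the constants $\theta_-$ and $\theta-\theta_+$ consistently through the triangle-inequality splitting so that they can be absorbed into the generic $C$ allowed in the statement, and verifying that Jensen's inequality transfers norms of expectations to expectations of norms at each of the three weighted terms. Once that is done, the two input theorems combine verbatim to produce the claimed bound, with the stability condition $\theta > \theta_+$ being inherited from the hypotheses of Theorems \ref{th:E_h} and \ref{th:E_S}.
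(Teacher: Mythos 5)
Your proposal is correct and follows essentially the same route as the paper's own proof: the same splitting of $E[u^n]-\Psi_h^n$ into $\mathcal{E}_h^n+\mathcal{E}_S^n$ with the factor-of-two triangle/Young inequality, the same use of Jensen's inequality to pass from $\|E[u_j^n-u_{j,h}^n]\|^2$ to $E[\|u_j^n-u_{j,h}^n\|^2]$, and the same invocation of Theorems \ref{th:E_h} and \ref{th:E_S} to conclude. No substantive differences.
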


\begin{proof}
Consider the first term on the LHS of \eqref{eq:rand1}. By the triangle and Young's inequality, we {have}
\begin{equation*}
E\left[\|E[u^N]-\Psi_h^N\|^2\right] \leq 2\left( E\left[ \|E[u^N]-E[u_h^N]\|^2 \right] + E\left[ \|E[u_h^N]- \Psi_h^N\|^2 \right] \right).
\end{equation*}
Applying Jensen's inequality to the first term on the RHS of the above inequality, we have
\begin{equation*}
E\left[ \|E[u^N]-E[u_h^N]\|^2\right] \leq E\left[ E[\|u^N- u_h^N\|^2] \right]= E\left[\|u^N- u_h^N\|^2\right].
\end{equation*}
Then the conclusion follows from Theorems \ref{th:E_h}-\ref{th:E_S}.
The other terms on the LHS of \eqref{eq:rand1} can be estimated in a similar manner.
\end{proof}
\section{Numerical experiments}
\label{sec:num}
We present two numerical tests on the ensemble schemes for second-order parabolic PDEs in this section: the first problem is deterministic heat transfer with an {\em a prior} known exact solution, which aims to illustrate {   Theorem \ref{th:error}}; the second problem is random heat transfer without a known exact solution, which {   is used to illustrate Theorem \ref{th:rand_error} and} shows the effectiveness of the ensemble method by comparing the results with those of independent, individual simulations.
\subsection{Deterministic heat transfer}
We first test the numerical performance of the ensemble algorithm on the deterministic second-order parabolic equation \eqref{Eq6}.
A group of simulations is considered, which contains $J=3$ members.
The diffusion coefficient and the exact solution of $j$-th simulation are selected as follows.
\begin{equation*}
\begin{aligned}
a_j(\bx, t)&= 1+(1+\epsilon_j)\sin(t)\sin(xy), \\
u_j(\bx, t)&= (1+\epsilon_j)[\sin(2\pi x)\sin(2\pi y)+\sin(4\pi
t)],
\end{aligned}
\end{equation*}
where $\epsilon_j$ is a perturbation randomly selected from $[0, 1]$, $t\in [0, 1]$ and $(x, y)\in [0, 1]^2$.
The initial condition, Dirichlet boundary condition and source term are chosen to match the exact solution.

The group is simulated by using the ensemble scheme \eqref{S0}.
In the test, the ensemble contains three members with $\epsilon_1 = 0.6207$, $\epsilon_2= 0.1841$, and $\epsilon_3 = 0.2691$.
In order to check the convergence order in time, we use quadratic finite elements, a uniform time partition, and uniformly refine the mesh size $h$ and time step size $\Delta t$ from the initial mesh size $\sqrt{2}/4$ and initial time step size $1/10$.
{   Let the maximum numerical approximation errors in $L^2$ norm be  
$$\mathcal{E}_{L^2}^{j} = \max_{n\in \{1, \ldots, N\}} \|u_j^n - u_{j, h}^n\|$$
and errors in the time average $H^1$ semi-norm be  
$$\mathcal{E}_{H^1}^{j} = \sqrt{ \Delta t\sum_{n=1}^N \|\nabla u_j^n - \nabla u_{j, h}^n\|^2} $$
for $j= 1, 2, 3$, respectively. 
The approximation errors of the ensemble method (denoted by $\mathcal{E}_{L^2}^{E, j}$ and $\mathcal{E}_{H^1}^{E, j}$) are listed in Table \ref{tab:t2u}.}
It is seen that the rate of convergence is nearly linear, which matches our theoretical analysis in  {   Theorem \ref{th:error}}.

\begin{table}[htp]
\centering
{\footnotesize
\caption{Numerical errors of the ensemble simulations. }
\label{tab:t2u}%
\begin{tabular}{|c|c|c|c|c|c|c|}
\hline
$\sqrt{2}/h$& $\mathcal{E}_{L^2}^{E, 1} $ & rate & $\mathcal{E}_{L^2}^{E, 2}$ & rate & $\mathcal{E}_{L^2}^{E, 3}$ & rate \\
\hline
4 & 2.2271$\times 10^{-1}$ &   -  & 2.2168$\times 10^{-1}$ &   - & 2.2177$\times 10^{-1}$ &   - \\
8 & 1.1477$\times 10^{-1}$ &   0.96  & 1.1623$\times 10^{-1}$ &   0.93 & 1.1594$\times 10^{-1}$ &   0.94 \\
16 & 5.9080$\times 10^{-2}$ &   0.96  & 5.9921$\times 10^{-2}$ &   0.96 & 5.9756$\times 10^{-2}$ &   0.96 \\
32 & 3.0007$\times 10^{-2}$ &   0.98  & 3.0445$\times 10^{-2}$ &   0.98 & 3.0359$\times 10^{-2}$ &   0.98\\
\hline
$\sqrt{2}/h$& $\mathcal{E}_{H^1}^{E, 1} $  & rate & $\mathcal{E}_{H^1}^{E, 2} $  & rate & $\mathcal{E}_{H^1}^{E, 3} $  & rate \\
\hline
4 & 1.3678$\times 10^{0}$ &   -  & 1.0922$\times 10^{0}$ &   - & 1.1437$\times 10^{0}$ &   - \\
8 & 4.7311$\times 10^{-1}$ &   1.53  & 4.2423$\times 10^{-1}$ &   1.36 & 4.3280$\times 10^{-1}$ &   1.40 \\
16 & 1.9969$\times 10^{-1}$ &   1.24  & 1.9560$\times 10^{-1}$ &   1.12 & 1.9618$\times 10^{-1}$ &   1.14 \\
32 & 9.5767$\times 10^{-2}$ &   1.06  & 9.6972$\times 10^{-2}$ &   1.01 & 9.6692$\times 10^{-2}$ &   1.02 \\
\hline
\end{tabular}
}
\end{table}

To compare with the individual simulations, we list in Table \ref{tab:t2u2} the numerical errors of independent simulations (denoted by $\mathcal{E}_{L^2}^{I, j}$ and $\mathcal{E}_{H^1}^{I, j}$) in the same computational setting. 
It is observed that the ensemble simulation results in Table \ref{tab:t2u} are close to those obtained from individual simulations in Table \ref{tab:t2u2}. Indeed, the errors are at the same order of magnitude and the convergence rates are almost same.

\begin{table}[htp]
\centering
{\footnotesize
\caption{Numerical errors of the independent simulations. }
\label{tab:t2u2}%
\begin{tabular}{|c|c|c|c|c|c|c|}
\hline
$\sqrt{2}/h$& $\mathcal{E}_{L^2}^{I, 1} $ & rate & $\mathcal{E}_{L^2}^{I, 2}$ & rate & $\mathcal{E}_{L^2}^{I, 3}$ & rate \\
\hline
4 & 2.2206$\times 10^{-1}$ &   -  & 2.2215$\times 10^{-1}$ &   - & 2.2200$\times 10^{-1}$ &   - \\
8 & 1.1469$\times 10^{-1}$ &   0.95  & 1.1629$\times 10^{-1}$ &   0.93 & 1.1597$\times 10^{-1}$ &   0.94 \\
16 & 5.9072$\times 10^{-2}$ &   0.96  & 5.9928$\times 10^{-2}$ &   0.96 & 5.9759$\times 10^{-2}$ &   0.96 \\
32 & 3.0007$\times 10^{-2}$ &   0.98  & 3.0446$\times 10^{-2}$ &   0.98 & 3.0359$\times 10^{-2}$ &   0.98 \\
\hline
$\sqrt{2}/h$& $\mathcal{E}_{H^1}^{I, 1} $  & rate & $\mathcal{E}_{H^1}^{I, 2} $  & rate & $\mathcal{E}_{H^1}^{I, 3} $  & rate \\
\hline
4 & 1.3641$\times 10^{0}$ &   -  & 1.0955$\times 10^{0}$ &   - & 1.1453$\times 10^{0}$ &   - \\
8 & 4.7186$\times 10^{-1}$ &   1.53  & 4.2529$\times 10^{-1}$ &   1.37 & 4.3331$\times 10^{-1}$ &   1.40 \\
16 & 1.9933$\times 10^{-1}$ &   1.24  & 1.9588$\times 10^{-1}$ &   1.12 & 1.9632$\times 10^{-1}$ &   1.14 \\
32 & 9.5677$\times 10^{-2}$ &   1.06  & 9.7041$\times 10^{-2}$ &   1.01 & 9.6726$\times 10^{-2}$ &   1.02 \\
\hline
\end{tabular}
}
\end{table}

\subsection{Random heat transfer}
Next we consider the second-order parabolic equation with a random diffusion
coefficient \eqref{Eq11} on the unit square domain. The test problem is
associated with the zero forcing term $f$, zero initial conditions,
and homogeneous Dirichlet boundary conditions on the top, bottom
and right edges of the domain but nonhomogeneous Dirichlet
boundary condition, $u= y(1-y)$, on the left edge. The random
coefficient varies in the vertical direction and has the following
form
\begin{equation}
a(\omega, \bx) = a_0 + \sigma \sqrt{\lambda_0} Y_0(\omega) +
\sum_{i=1}^{n_f} \sigma \sqrt{\lambda_i}\left[Y_i(\omega)\cos(i\pi
y) + Y_{n_f+i}(\omega)\sin(i\pi y)\right] \label{eq:a}
\end{equation}
with $\lambda_0 = \frac{\sqrt{\pi L_c}}{2}$, $\lambda_i =
\sqrt{\pi} L_c e^{-\frac{(i \pi L_c)^2}{4}}$ for $i=1, \ldots,
n_f$ and $Y_0$, \ldots, $Y_{2n_f}$ are uncorrelated random
variables with zero mean and unit variance. In the following numerical test,
we take $a_0= 1$, $L_c = 0.25$, $\sigma = 0.15$, $n_f = 3$ and
assume the random variables $Y_0, \ldots, Y_{2n_f}$ are
independent and uniformly distributed in the interval $[-\sqrt{3},
\sqrt{3}]$.
We use linear finite elements for spatial discretization and simulate the system over the
time interval $[0, 0.5]$.
This choice of final time guarantees a steady-state can be achieved at the end of simulations.
A similar computational setting is used in \cite{nobile2009analysis} to compare several numerical methods
for parabolic equations with random coefficients. 
{In the following tests, the uniform triangulation with the maximum mesh size $h=\sqrt{2}/32$ and the uniform time partition with the time step size $\Delta t = 2.5\times 10^{-3}$ are used. 
}

{   For the implementation of the EMC method as discussed in Section \ref{sec:random}, we first select a set of $J$ random samples by the MC sampling, then run our deterministic code for simulating the ensemble of the deterministic PDEs associated with the $J$ realizations. Since the numerical accuracy with respect to the mesh size and time step size for the deterministic case has been verified in the first example, here we only check the rate of convergence in the EMC approximation error with respect to the number of samples, $J$. As the exact solution is unknown, we choose the EMC solution using $J_0=5000$ samples as our benchmark, vary the values of $J$ in the EMC simulations, and then evaluate the approximation errors based on the benchmark. 
Furthermore, we repeat such error analysis for $M= 10$ independent replicas and compute the average of the output errors. 
Denote the EMC solution at time $t_n$ in the $m$-th independent replica by $\Psi_{J, h}^{n, m}= \frac{1}{J} \sum_{j=1}^J u_{j, h}^{n,m}$, where $u_{j, h}^{n,m}$ when $J$ sample points are considered. 
Define 
$$
\mathcal{E}_{L^2}= \max_{n\in \{1, \ldots, N\}}\sqrt{ \frac{1}{M} \sum_{m=1}^{M} \| \Psi_{J_0, h}^{n, m} - \Psi_{J, h}^{n, m} \|^2 }\,,
$$
$$
\mathcal{E}_{H^1}= \sqrt{ \frac{\Delta t}{M} \sum_{m=1}^{M}  \sum_{n=1}^N \| \nabla \Psi_{J_0, h}^{n, m} - \nabla \Psi_{J, h}^{n, m} \|^2 }\,.
$$
The numerical results at $J=10, 20, 40, 80, 160$ are listed in Table \ref{tab:rand_err}. 
We further apply the linear regression analysis on these numerical results, which shows $\mathcal{E}_{L^2}\approx 0.0032\,J^{-0.5133 }$ and $\mathcal{E}_{H^1}\approx 0.0104\, J^{-0.4877}$. 
The values of $\mathcal{E}_{L^2}$ and $\mathcal{E}_{H^1}$ together with their linear regression models are plotted in Figure \ref{fig:rand_err} respectively. 
It is seen that the rate of convergence with respect to $J$ is close to $-0.5$, which coincides with our theoretical results in Theorem \ref{th:rand_error}.
}

\begin{figure}[htp]
\centering
\includegraphics[width=.5\textwidth]{./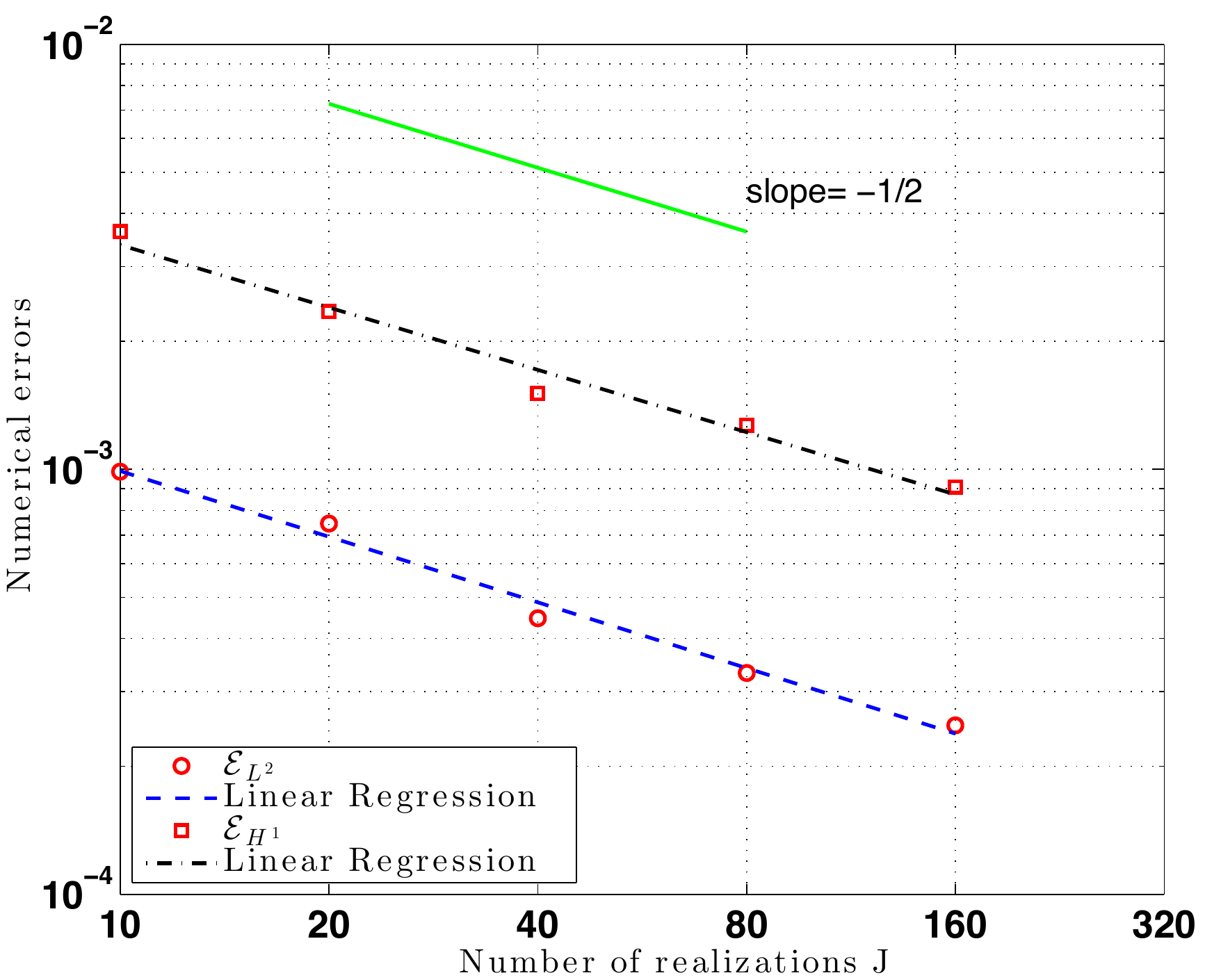}
\caption{Ensemble simulations: errors converge on the order of $\mathcal{O}(1/\sqrt{J})$.} 
\label{fig:rand_err}
\end{figure}

\begin{table}[htp]
\centering
{\footnotesize
\caption{Numerical errors of ensemble simulations.} \label{tab:rand_err}
\begin{tabular}{|c|c|c|c|c|c|}
\hline
$J$& 10 & 20 & 40 & 80 & 160 \\
\hline
$\mathcal{E}_{L^2}$ & 9.8672e-04 & 7.4471e-04 & 4.4640e-04 & 3.3151e-04 & 2.4966e-04 \\
$\mathcal{E}_{H^1}$ & 3.6240e-03 & 2.3475e-03 & 1.5080e-03 & 1.2696e-03 & 9.0903e-04 \\
\hline
\end{tabular}
}
\end{table}
Next, we analyze some stochastic information of the system {   including the expectation of $u$ at the final time and a quantity of interest.
In particular,  we are mainly interested in comparing the ensemble simulation outputs with those from the individual simulations when the same set of samples is used.} 
More precisely, we approximate the expected value 
$E[u(\omega, \bx, T)]$ by the EMC approximation  
 $$\Psi_h^E(\bx) \coloneqq \frac{1}{J}\sum_{j=1}^J u_{h}^{E}(\omega_j, \bx, T),$$
{    where $u_{h}^{E}(\omega_j, \bx, T)$ is the $j$-th member solution in the ensemble-based simulation at time $T$. 
Taking} the number of sample points to be $J=5000$, we compute the mean and standard deviation of the solutions at the final time, which are plotted in Figure \ref{fig:rand_mean_std} (left and middle).
\begin{figure}[htp]
\centering
\begin{minipage}{0.32\textwidth}
\includegraphics[width=.9\textwidth]{./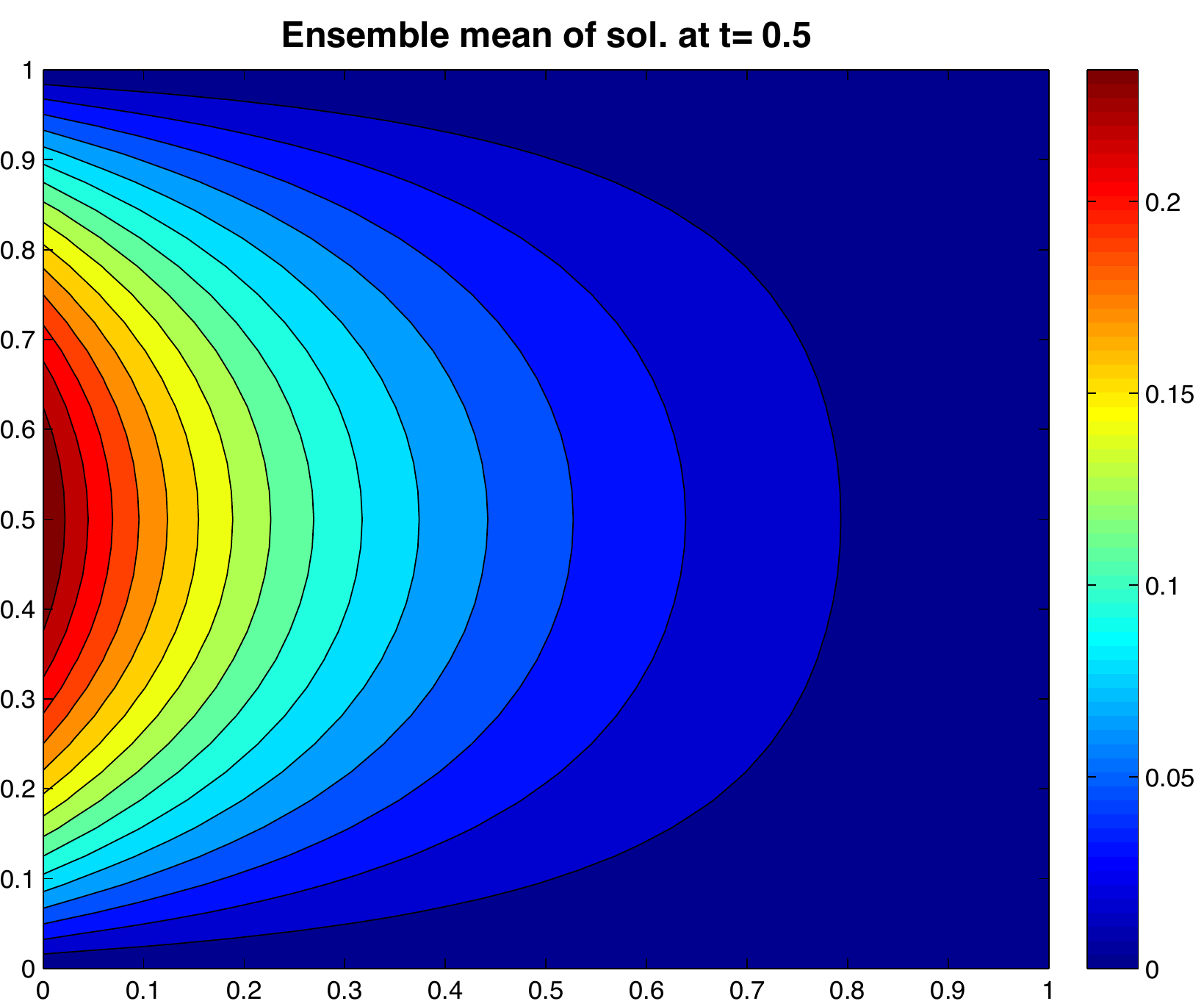}
\end{minipage}
\begin{minipage}{0.32\textwidth}
\includegraphics[width=.9\textwidth]{./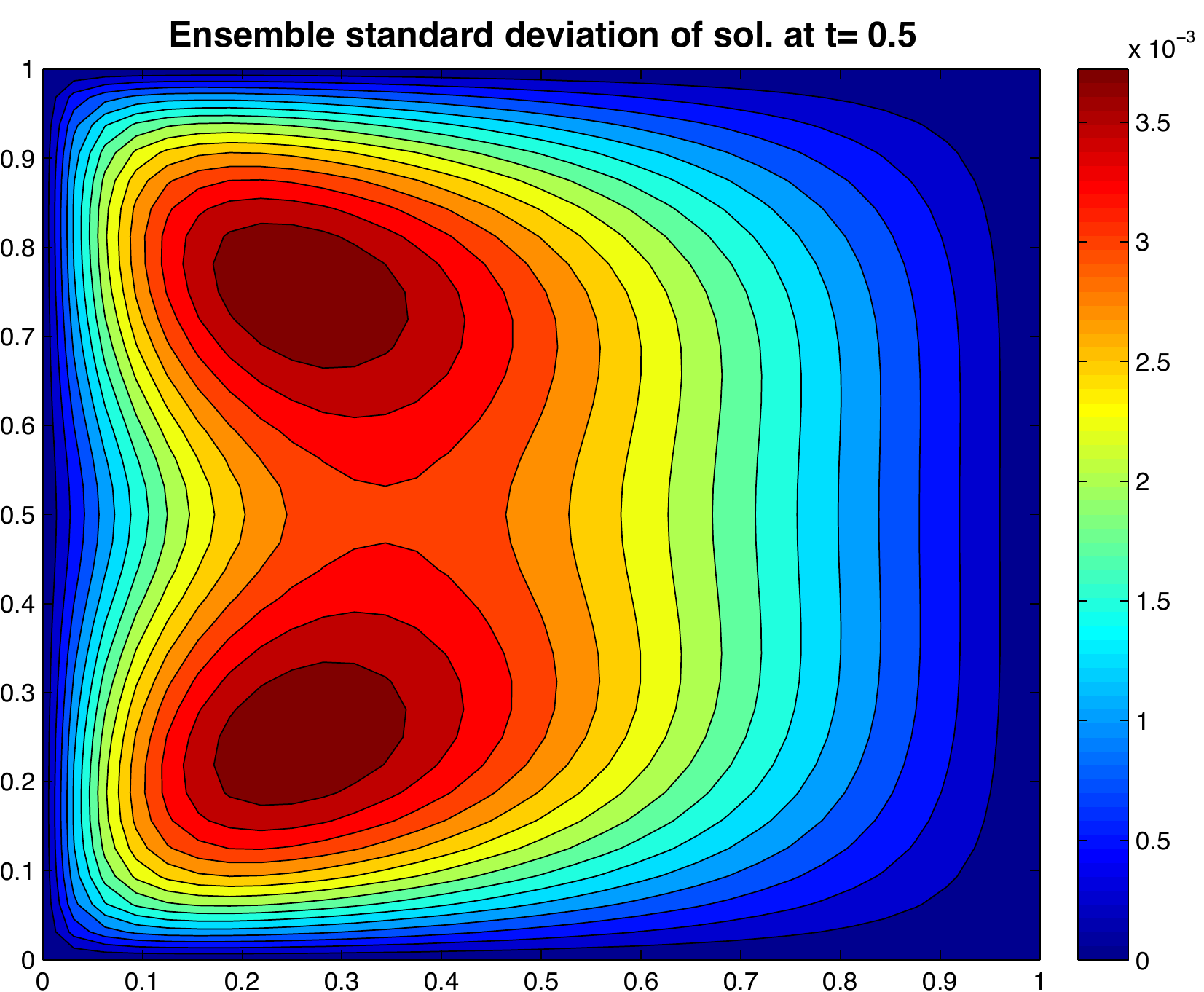}
\end{minipage}
\begin{minipage}{0.32\textwidth}
\includegraphics[width=.9\textwidth]{./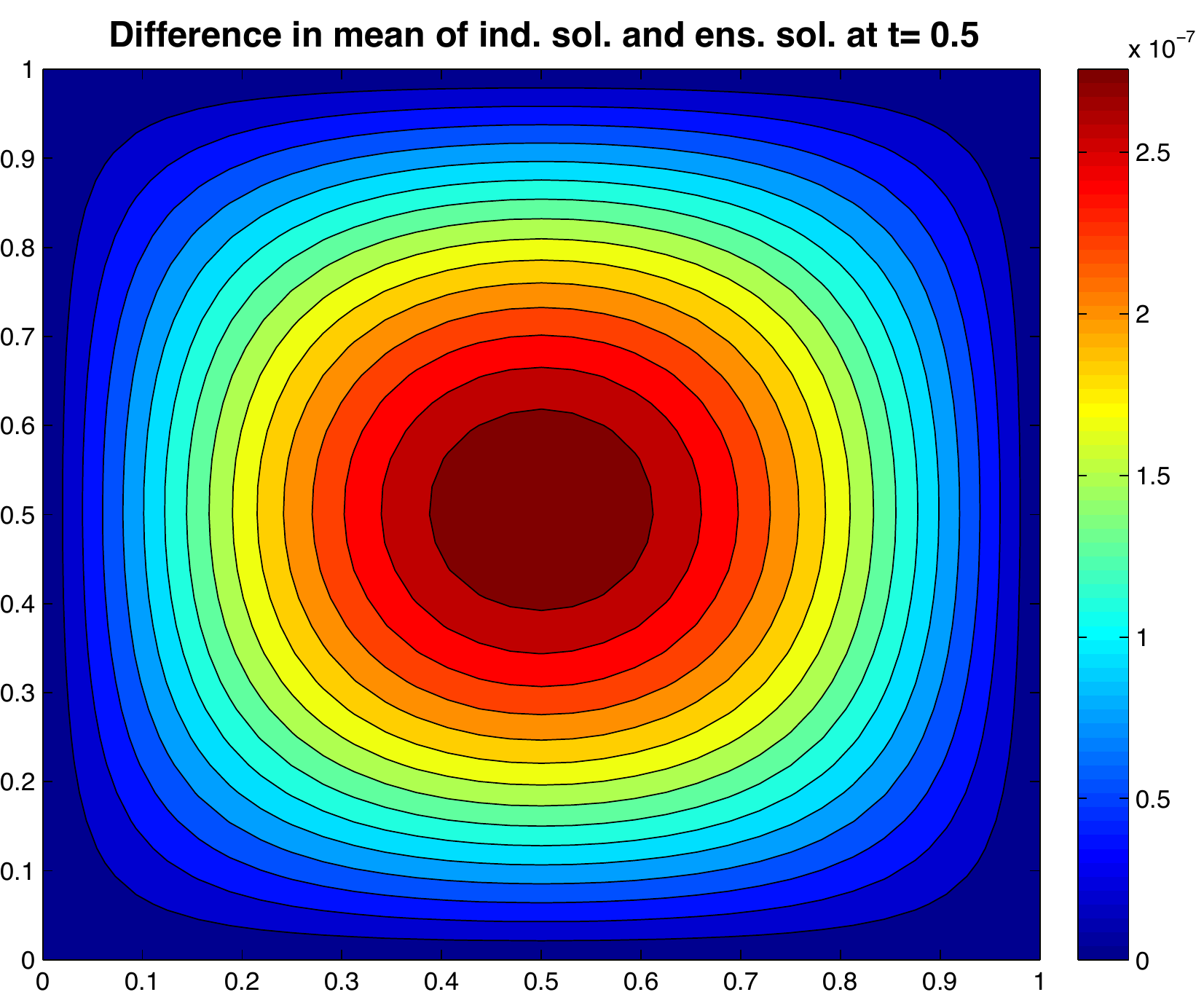}
\end{minipage}
\caption{Ensemble simulations: mean (left), standard deviation
(middle) of solution at $t= 0.5$, difference of the mean from that of the individual FEMC simulations (right).} \label{fig:rand_mean_std}
\end{figure}
To quantify the performance of the EMC method, we compare the result with that of individual finite element Monte Carlo (FEMC)  simulations using the same set of sample values.
Denote the FEMC approximation solution by
 $$\Psi_h^I(\bx) \coloneqq \frac{1}{J}\sum_{j=1}^J u_{h}^{I}(\omega_j, \bx, T),$$
{   where $u_{h}^{I}(\omega_j, \bx, T)$ is the $j$-th independent solution at time $T$. }
The difference between $\Psi_h^E$ and $\Psi_h^I$ is shown in Figure \ref{fig:rand_mean_std} (right).
It is observed that the difference is on the order of $10^{-7}$, which indicates the EMC method is able to provide the same accurate approximation as the individual FEMC simulations.
{However, the CPU time for the ensemble simulation is $1.4494\times 10^4$ seconds, while that of the individual simulations is $4.9197\times 10^4$ seconds. The former improves the computational efficiency by about 70\%.
Here, because the size of discrete system is small, we use Matlab and its LU matrix factorization for solving the linear system.}

Following \cite{nobile2009analysis}, we also calculate the quantity of interest
$$Q(\omega) = \int_D u(\omega, \bx, T)\,d D.$$
When the sample size is $J= 5000$, the histogram of the quantity of interest obtained from
the ensemble simulations, $Q_h^{E}$, is shown in Figure \ref{fig:rand_hist}
(left) with a fitted gaussian distribution.
We then do a comparison with the quantity of interest, $Q_h^{I}$, achieved from individual simulations at the same sampling set.
The histogram of the differences in absolute value, $\left|Q_h^{E}-Q_h^{I}\right|$, is plotted in
Figure \ref{fig:rand_hist} (right), which also illustrates that the EMC method outputs a close quantity of interest to the
standard FEMC method.

\begin{figure}[htp]
\begin{minipage}{0.49\textwidth}
\includegraphics[width=1\textwidth]{./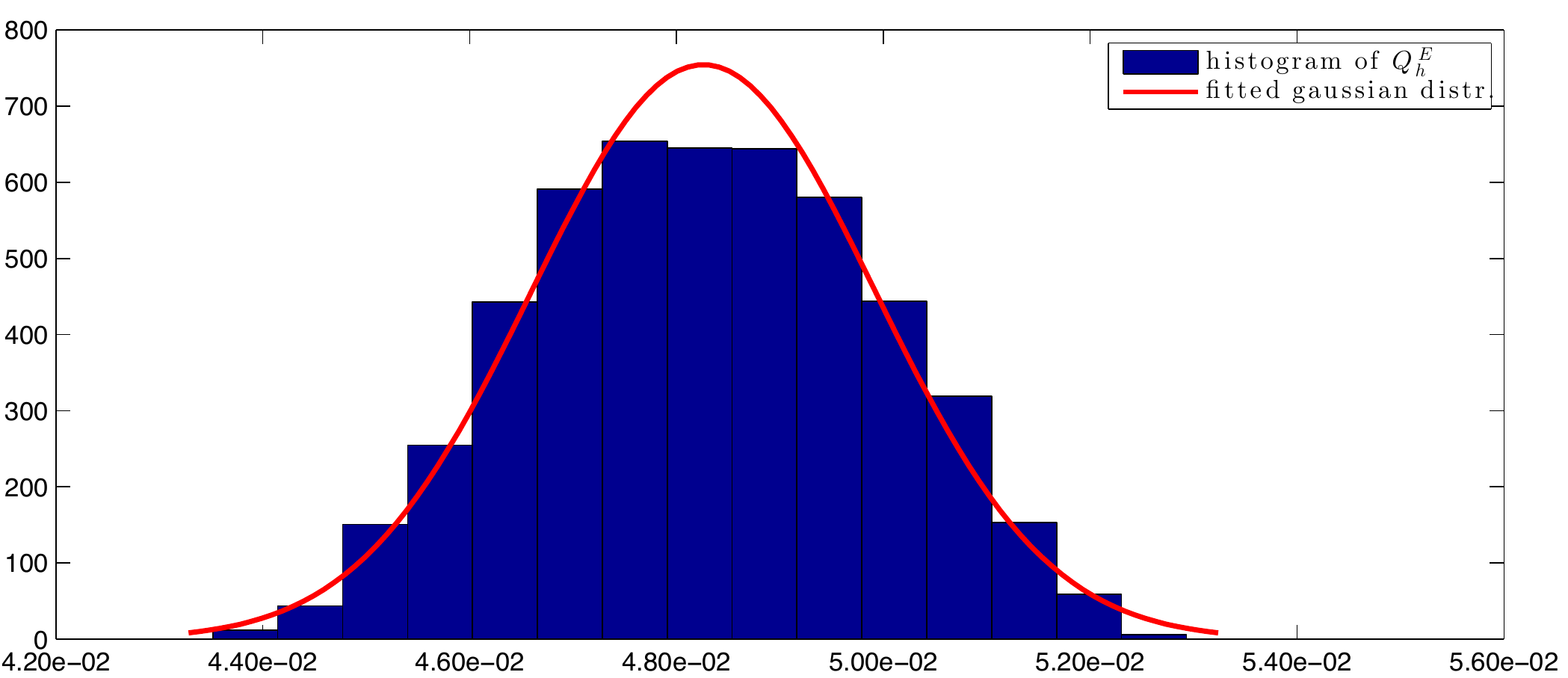}
\end{minipage}
\begin{minipage}{0.49\textwidth}
\vspace{.8em}
\includegraphics[width=1\textwidth]{./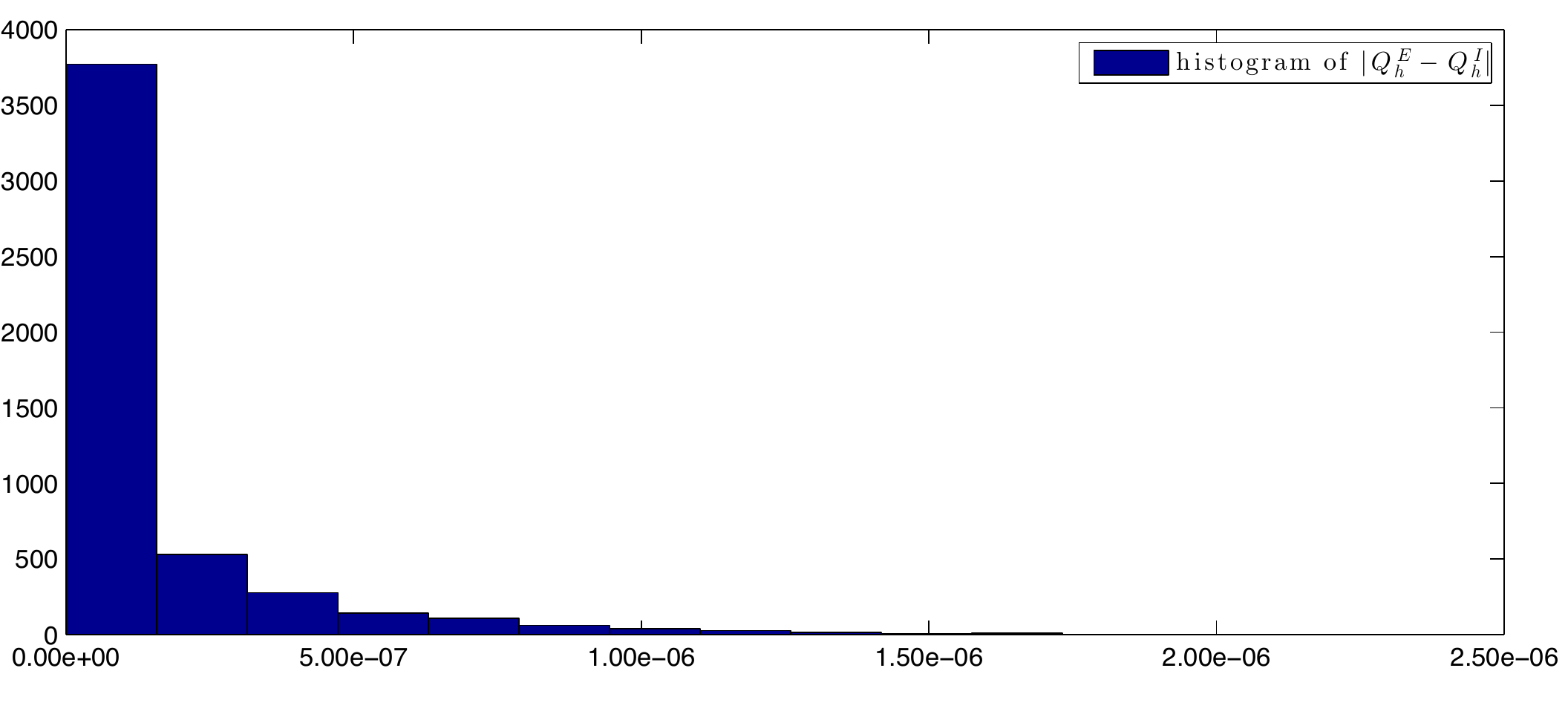}
\end{minipage}
\caption{Ensemble simulations: histogram of the quantity of
interest $Q_h^E$ with a fitted gaussian distribution (left); histogram of
the difference of the quantity in absolute value, $\left|Q_h^{E}-Q_h^{I}\right|$, between ensemble
simulations and individual simulations (right).}
\label{fig:rand_hist}
\end{figure}


\section{Conclusion}
\label{sec:con}
We propose an ensemble-based algorithm in this paper to improve the computational efficiency for a group of numerical solutions to parabolic problems.
The fundamental idea is to turn the linear systems associated to the group {into} a linear system with multiple right-hand-side vectors, which would reduce the computational time.
We first analyze the ensemble scheme for deterministic equations, then develop the ensemble-based Monte Carlo method for stochastic equations.
The effectiveness of both cases is demonstrated through rigorous error estimates and {illustrated with numerical experiments}.
The approach can be easily extended to more general, nonlinear parabolic equations, which is one of the research directions we are pursuing.

\section*{Acknowledgments}
The first author would like to thank the support of China Scholarship Council for visiting the Interdisciplinary Mathematics Institute at University of South Carolina during the year 2016--2017. {  We also thank the anonymous referees for their comments and suggestions, which significantly improved this paper.}

\newpage
\bibliographystyle{siamplain}
\bibliography{Ensemble}

\begin{thebibliography}{10}

\bibitem{babuvska2005solving}
{\sc I.~Babu{\v{s}}ka, R.~Tempone, and G.~E. Zouraris}, {\em Solving elliptic
  boundary value problems with uncertain coefficients by the finite element
  method: the stochastic formulation}, Computer methods in applied mechanics
  and engineering, 194 (2005), pp.~1251--1294.

\bibitem{brennermathematical}
{\sc S.~Brenner and R.~Scott}, {\em The mathematical theory of finite element
  methods}, vol.~15, Springer Science \& Business Media, New York, USA, 2007.

\bibitem{ganapathysubramanian2007sparse}
{\sc B.~Ganapathysubramanian and N.~Zabaras}, {\em Sparse grid collocation
  schemes for stochastic natural convection problems}, Journal of Computational
  Physics, 225 (2007), pp.~652--685.

\bibitem{gunzburger2017ensemble}
{\sc M.~Gunzburger, N.~Jiang, and M.~Schneier}, {\em An ensemble-proper
  orthogonal decomposition method for the nonstationary navier--stokes
  equations}, SIAM Journal on Numerical Analysis, 55 (2017), pp.~286--304.

\bibitem{Gunzburger2016higher}
{\sc M.~Gunzburger, N.~Jiang, and M.~Schneier}, {\em A higher-order
  ensemble/proper orthogonal decomposition method for the nonstationary
  navier-stokes equations},  (in press).

\bibitem{Gunzburger2016efficient}
{\sc M.~Gunzburger, N.~Jiang, and Z.~Wang}, {\em An efficient algorithm for
  simulating ensembles of parameterized flow problems},  (submitted).

\bibitem{gunzburger2014stochastic}
{\sc M.~D. Gunzburger, C.~G. Webster, and G.~Zhang}, {\em Stochastic finite
  element methods for partial differential equations with random input data},
  Acta Numerica, 23 (2014), pp.~521--650.

\bibitem{helton2003latin}
{\sc J.~C. Helton and F.~J. Davis}, {\em Latin hypercube sampling and the
  propagation of uncertainty in analyses of complex systems}, Reliability
  Engineering \& System Safety, 81 (2003), pp.~23--69.

\bibitem{jiang2015higher}
{\sc N.~Jiang}, {\em A higher order ensemble simulation algorithm for fluid
  flows}, Journal of Scientific Computing, 64 (2015), pp.~264--288.

\bibitem{jiang2017second}
{\sc N.~Jiang}, {\em A second-order ensemble method based on a blended backward
  differentiation formula timestepping scheme for time-dependent navier--stokes
  equations}, Numerical Methods for Partial Differential Equations, 33 (2017),
  pp.~34--61.

\bibitem{jiang2015analysis}
{\sc N.~Jiang, S.~Kaya, and W.~Layton}, {\em Analysis of model variance for
  ensemble based turbulence modeling}, Computational Methods in Applied
  Mathematics, 15 (2015), pp.~173--188.

\bibitem{jiang2014algorithm}
{\sc N.~Jiang and W.~Layton}, {\em An algorithm for fast calculation of flow
  ensembles}, International Journal for Uncertainty Quantification, 4 (2014).

\bibitem{jiang2015numerical}
{\sc N.~Jiang and W.~Layton}, {\em Numerical analysis of two ensemble eddy
  viscosity numerical regularizations of fluid motion}, Numerical Methods for
  Partial Differential Equations, 31 (2015), pp.~630--651.

\bibitem{kalnay2003atmospheric}
{\sc E.~Kalnay}, {\em Atmospheric modeling, data assimilation and
  predictability}, Cambridge University Press, New York, USA, 2003.

\bibitem{liu2013discontinuous}
{\sc K.~Liu and B.~M. Rivi{\`e}re}, {\em Discontinuous galerkin methods for
  elliptic partial differential equations with random coefficients},
  International Journal of Computer Mathematics, 90 (2013), pp.~2477--2490.

\bibitem{mathelin2005stochastic}
{\sc L.~Mathelin, M.~Y. Hussaini, and T.~A. Zang}, {\em Stochastic approaches
  to uncertainty quantification in {CFD} simulations}, Numerical Algorithms, 38
  (2005), pp.~209--236.

\bibitem{mohebujjaman2017efficient}
{\sc M.~Mohebujjaman and L.~G. Rebholz}, {\em An efficient algorithm for
  computation of mhd flow ensembles}, Computational Methods in Applied
  Mathematics, 17 (2017), pp.~121--137.

\bibitem{niederreiter1992random}
{\sc H.~Niederreiter}, {\em Random number generation and quasi-Monte Carlo
  methods}, SIAM, Philadelphia, PA, USA, 1992.

\bibitem{nobile2009analysis}
{\sc F.~Nobile and R.~Tempone}, {\em Analysis and implementation issues for the
  numerical approximation of parabolic equations with random coefficients},
  International journal for numerical methods in engineering, 80 (2009),
  pp.~979--1006.

\bibitem{parks2016block}
{\sc M.~L. Parks, K.~M. Soodhalter, and D.~B. Szyld}, {\em A block recycled
  gmres method with investigations into aspects of solver performance}, arXiv
  preprint arXiv:1604.01713,  (2016).

\bibitem{takhirov2015time}
{\sc A.~Takhirov, M.~Neda, and J.~Waters}, {\em Time relaxation algorithm for
  flow ensembles}, Numerical Methods for Partial Differential Equations, 32
  (2016), pp.~757--777.

\bibitem{xiu2005high}
{\sc D.~Xiu and J.~S. Hesthaven}, {\em High-order collocation methods for
  differential equations with random inputs}, SIAM Journal on Scientific
  Computing, 27 (2005), pp.~1118--1139.

\bibitem{zhu2017multi}
{\sc X.~Zhu, E.~M. Linebarger, and D.~Xiu}, {\em Multi-fidelity stochastic
  collocation method for computation of statistical moments}, Journal of
  Computational Physics, 341 (2017), pp.~386--396.

\end{thebibliography}
\end{document}